\renewcommand{\@seccntformat}[1]{\bf\csname the#1\endcsname.}
\renewcommand{\section}{\@startsection{section}{1}
	\z@{.7\linespacing\@plus\linespacing}{.5\linespacing}
	{\normalfont\upshape\bfseries\centering}}
\renewcommand{\@biblabel}[1]{\@ifnotempty{#1}{#1.}}
\theoremstyle{plain}
\newtheorem{thm}{Theorem}[section]
\newtheorem{lem}[thm]{Lemma}
\newtheorem{prop}[thm]{Proposition}
\newtheorem{cor}[thm]{Corollary}
\theoremstyle{definition}
\newtheorem{defn}[thm]{Definition}
\newtheorem{rem}{Remark}[section]
\def \>{\succ}
\def \<{\prec}
\DeclareMathOperator{\QDer}{QDer}
\begin{document}	
\title[Basdouri Imed\textsuperscript{1}, Jean Lerbet\textsuperscript{2}, Bouzid Mosbahi\textsuperscript{3}]{Quasi-centroids and Quasi-Derivations of low-dimensional Zinbiel algebras }
	\author{Basdouri Imed\textsuperscript{1}, Jean Lerbet\textsuperscript{2}, Bouzid Mosbahi\textsuperscript{3}}
 \address{\textsuperscript{1}Department of Mathematics, Faculty of Sciences, University of Gafsa, Gafsa, Tunisia}
 \address{\textsuperscript{2}Laboratoire de Mathématiques et Modélisation d’Évry (UMR 8071) Université d’Évry Val d’Essonne I.B.G.B.I., 23 Bd. de France, 91037 Évry Cedex, France}
\address{\textsuperscript{3}Department of Mathematics, Faculty of Sciences, University of Sfax, Sfax, Tunisia}

 \email{\textsuperscript{1}basdourimed@yahoo.fr}
 \email{\textsuperscript{2}jean.lerbet@ufrst.univ-evry.fr}
\email{\textsuperscript{3}mosbahi.bouzid.etud@fss.usf.tn}
	

	\keywords{Zinbiel algebra,Quasi-centroids, Quasi-derivations}
	\subjclass[2020]{17A30, 17A32}
	

	\date{\today}
\begin{abstract}  In this paper, we introduce the concepts of quasi-centroid and quasi-derivation for Zinbiel algebras. Utilizing the classification results of Zinbiel algebras established previously, we describe the quasi-centroids and quasi-derivations of low-dimensional Zinbiel algebras. Additionally, we explore certain properties of quasi-centroids in the context of Zinbiel algebras and employ these properties to classify algebras with so-called small quasi-centroids. This description of quasi-derivations allows us to identify a significant subclass of Zinbiel algebras characterized as quasi-characteristically nilpotent.
\end{abstract}

\maketitle \section{Introduction}\label{introduction}
Non-associative algebras have been widely studied for their theoretical importance and broad applications in fields like physics, engineering, and applied mathematics. Within this area, Zinbiel algebras hold a distinct position. Introduced by J.-L. Loday in 1995, Zinbiel algebras serve as the Koszul duals of Leibniz algebras in \cite{1,2,3}. The term \textit{Zinbiel}, suggested by J.M. Lemaire, reflects this relationship, as it is ``Leibniz'' spelled backward, indicating their association with commutative associative algebras, pre-Lie algebras, and dendriform algebras in \cite{4}.
Zinbiel algebras are defined by a specific right-symmetric identity:
\begin{align*}
(p\cdot q)\cdot r &= p\cdot (q \cdot r)+p\cdot(r\cdot q)
\end{align*}
and naturally connect to dendriform algebras, appearing in contexts such as algebraic combinatorics, noncommutative geometry, and the study of operads. These algebras generalize commutative structures in a way similar to how Leibniz algebras extend Lie algebras. The study of Zinbiel algebras has evolved to include various applications and links to fields such as classification, derivatins, centroids, homological algebra, deformation theory, and quantum field theory in \cite{5,6,7,8,9}.

This paper focuses on defining and describing quasi-centroids and quasi-derivations of Zinbiel algebras. Quasi-centroids and quasi-derivations are derived from investigations of low-dimensional Zinbiel algebras, where these structures play a key role in classification and algebraic applications. Our analysis draws on existing classifications of complex Zinbiel algebras in dimensions two, three, and four, referenced in works such as \cite{10,11,12}.

The structure of this paper is as follows: Section 1 provides an introduction to Zinbiel algebras along with a review of key results. Section 2 presents the basic definitions and preliminary concepts essential for understanding Zinbiel algebras. Section 3 focuses on the properties of quasi-centroids and quasi-derivations. Sections 4 and 5 describe and compute quasi-centroids and quasi-derivations in finite-dimensional Zinbiel algebras. We utilize classification results for low-dimensional complex Zinbiel algebras from \cite{13} and illustrate our methods and computations with examples generated using Mathematica software.

\section{Prelimieries}
This section introduces essential definitions and foundational concepts that will be relevant for the study of Zinbiel algebras.

\begin{defn}
An algebra  $Z$ over a field $\mathbb{K}$ is a vector space over $\mathbb{K}$ equipped with a bilinear map
\begin{align*}
\xi: Z \times Z \to Z,
\end{align*}
such that
\begin{align*}
\xi(\alpha p + \beta q, r) &= \alpha \xi(p, r) + \beta \xi(q, r),\\
\xi(r, \alpha p + \beta q) &= \alpha \xi(r, p) + \beta \xi(r, q),
\end{align*}
where  $p, q, r \in Z$  and  $\alpha, \beta \in \mathbb{K}$.
\end{defn}

\begin{defn}
A Zinbiel algebra  $Z$  over a field  $\mathbb{K}$ is an algebra satisfying the following condition:
\begin{align*}
(p\cdot q)\cdot r &= p\cdot (q \cdot r)+p\cdot(r\cdot q)
\end{align*}
for all \quad $p, q, r \in Z$.
\end{defn}

Under the conditions of a new product presented as $p\cdot q + q\cdot p$.
It is possible to obtain a structure of the associative commutative algebra on Z. The
definition of a sequence for a provided Zinbiel algebra Z can be given in the
following way:
$$Z^{1} = Z,\quad Z^{t+1} = Z\cdot Z^{t},\quad t \geq 1$$

\begin{defn}
Let  $Z$ and $Z'$ be two Zinbiel algebras over a field $\mathbb{K}$. A linear mapping  $\psi : Z \rightarrow Z_1$ is a \textit{homomorphism} if
\begin{align*}
\psi(p \cdot q) = \psi(p) \ast \psi(q), \;\text{for all}\; p, q \in Z.
\end{align*}
\end{defn}

\begin{defn}
Let  $Z$ be a Zinbiel algebra over a field $\mathbb{K}$. A linear mapping  $\psi : Z \rightarrow Z$  is an \textit{endomorphism} if
\begin{align*}
\psi(p \cdot q) &= \psi(p) \cdot \psi(q), \;\text{for all}\; p, q \in Z.
\end{align*}
\end{defn}

We denote $End_{\mathbb{K}}(Z)$ as the set of all endomorphisms of \( Z \).

\begin{defn}
A linear transformation  $d$  of a Zinbiel algebra  $Z$  is called a \textit{derivation} if for any $p, q \in Z$,
\begin{align*}
d(p \cdot q) &= d(p) \cdot q + p \cdot d(q).
\end{align*}
\end{defn}

The space of all derivations of the algebra $Z$, equipped with the multiplication defined as the commutator, forms a Lie algebra which is denoted by $Der(Z)$.

\begin{prop}
Let $Z$ be a Zinbiel algebra. Then $Der(Z)$ is a Lie algebra with respect to the bracket $[d_1, d_2] = d_1 \circ d_2 - d_2 \circ d_1$.
\end{prop}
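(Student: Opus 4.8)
The plan is to realize $\Der(Z)$ as a Lie subalgebra of the associative algebra $\End_{\mathbb{K}}(Z)$ equipped with its commutator bracket. Recall that for any associative algebra $(A,\circ)$ the commutator $[a,b]=a\circ b-b\circ a$ automatically satisfies bilinearity, antisymmetry, and the Jacobi identity; this applies in particular to $A=\End_{\mathbb{K}}(Z)$ under composition. Consequently, to prove the proposition it suffices to establish two things: that $\Der(Z)$ is a linear subspace of $\End_{\mathbb{K}}(Z)$, and that it is closed under the commutator bracket. The three Lie axioms are then inherited from the ambient structure with no further work.

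The subspace property is routine: if $d_1,d_2$ are derivations and $\alpha,\beta\in\mathbb{K}$, then bilinearity of the product $\cdot$ shows at once that $\alpha d_1+\beta d_2$ again satisfies the Leibniz rule, so $\Der(Z)$ is closed under linear combinations. The substantive step is closure under the bracket. Given $d_1,d_2\in\Der(Z)$ and $p,q\in Z$, I would expand
\begin{align*}
(d_1\circ d_2)(p\cdot q) &= d_1\bigl(d_2(p)\cdot q+p\cdot d_2(q)\bigr)\\
&= d_1 d_2(p)\cdot q+d_2(p)\cdot d_1(q)+d_1(p)\cdot d_2(q)+p\cdot d_1 d_2(q),
\end{align*}
by applying the derivation property of $d_2$ and then of $d_1$, and write down the analogous expansion of $(d_2\circ d_1)(p\cdot q)$. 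Subtracting the two, the mixed terms $d_2(p)\cdot d_1(q)$ and $d_1(p)\cdot d_2(q)$ cancel, leaving
\begin{align*}
[d_1,d_2](p\cdot q)=[d_1,d_2](p)\cdot q+p\cdot[d_1,d_2](q),
\end{align*}
which is precisely the statement that $[d_1,d_2]$ is again a derivation.

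I expect the cancellation of the mixed terms to be the only genuine point to verify, and it is in fact immediate once both expansions are written out. It is worth noting that this computation relies solely on the bilinearity of the product together with the Leibniz rule, and never invokes the Zinbiel identity itself; thus the same argument shows that the derivations of \emph{any} nonassociative $\mathbb{K}$-algebra form a Lie algebra, the Zinbiel case being a special instance. With closure established and the Lie axioms inherited from $\End_{\mathbb{K}}(Z)$, the proof is complete.
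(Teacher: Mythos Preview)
Your proof is correct and follows essentially the same approach as the paper: both verify closure of $\Der(Z)$ under the commutator by expanding $(d_1\circ d_2)(p\cdot q)$ and $(d_2\circ d_1)(p\cdot q)$ via the Leibniz rule and observing the cancellation of the mixed terms, and both obtain antisymmetry and the Jacobi identity from the general fact that the commutator on $\End_{\mathbb{K}}(Z)$ satisfies them. Your additional remark that the Zinbiel identity is never invoked is a worthwhile observation not made explicit in the paper.
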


\begin{proof}
For $d_1, d_2 \in Der(Z)$, we need to verify that the bracket $[d_1, d_2] = d_1 \circ d_2 - d_2 \circ d_1$ is also a derivation on $Z$, i.e., that it satisfies the derivation property:
\begin{align*}
[d_1, d_2](p \cdot q) &= [d_1, d_2](p) \cdot q + p \cdot [d_1, d_2](q).
\end{align*}

Calculating $[d_1, d_2](p \cdot q)$ :

\begin{align*}
[d_1, d_2](p \cdot q) &= (d_1 \circ d_2)(p \cdot q) - (d_2 \circ d_1)(p \cdot q) \\
&= d_1(d_2(p \cdot q)) - d_2(d_1(p \cdot q)).
\end{align*}
Using the derivation property of  $d_2$  on  $p \cdot q$ , we get
\begin{align*}
d_2(p \cdot q) &= d_2(p) \cdot q + p \cdot d_2(q).
\end{align*}
Thus,
\begin{align*}
d_1(d_2(p \cdot q)) &= d_1(d_2(p) \cdot q + p \cdot d_2(q)) \\
&= d_1(d_2(p)) \cdot q + d_2(p) \cdot d_1(q) + d_1(p) \cdot d_2(q) + p \cdot d_1(d_2(q)),
\end{align*}
and similarly,
\begin{align*}
d_2(d_1(p \cdot q)) &= d_2(d_1(p) \cdot q + p \cdot d_1(q)) \\
&= d_2(d_1(p)) \cdot q + d_1(p) \cdot d_2(q) + d_2(p) \cdot d_1(q) + p \cdot d_2(d_1(q)).
\end{align*}
Subtracting these two expressions, we find
\begin{align*}
[d_1, d_2](p \cdot q) &= \big(d_1(d_2(p)) - d_2(d_1(p))\big) \cdot q + p \cdot \big(d_1(d_2(q)) - d_2(d_1(q))\big) \\
&= [d_1, d_2](p) \cdot q + p \cdot [d_1, d_2](q).
\end{align*}
Thus, $[d_1, d_2]$ satisfies the derivation property and hence  $[d_1, d_2] \in Der(Z)$.

Next, to conclude that $Der(Z)$ is a Lie algebra, we need to check two properties of the bracket $[\cdot, \cdot]$:

1. \textit{Antisymmetry}: For any $d_1, d_2 \in Der(Z)$,
   \begin{align*}
   [d_1, d_2] &= -[d_2, d_1],
   \end{align*}
   which follows directly from the definition:
   \begin{align*}
   [d_1, d_2] &= d_1 \circ d_2 - d_2 \circ d_1 = -(d_2 \circ d_1 - d_1 \circ d_2) = -[d_2, d_1].
  \end{align*}

2. \textit{Jacobi identity}: For any $d_1, d_2, d_3 \in Der(Z)$, we must show that
\begin{align*}
   [d_1, [d_2, d_3]] + [d_2, [d_3, d_1]] + [d_3, [d_1, d_2]] &= 0.
\end{align*}
   This follows from the fact that $[\cdot, \cdot]$ is the commutator of linear maps, which always satisfies the Jacobi identity.

Since both antisymmetry and Jacobi identity hold, we conclude that $Der(Z)$ is a Lie algebra under the bracket $[d_1, d_2] = d_1 \circ d_2 - d_2 \circ d_1$.
\end{proof}

\begin{prop}
Let $(Z, \cdot)$ be a Zinbiel algebra and $d \in \text{Hom}(Z)$. Then the following conditions hold:
\begin{enumerate}
    \item $d \in \text{Der}(Z)$
    \item $[d, L_{p}] = L_{d(p)}$
    \item $[d, R_{p}] = R_{d(p)}$
\end{enumerate}
\end{prop}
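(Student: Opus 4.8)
The plan is to read the statement as the equivalence of the three conditions, since $d\in\Der(Z)$ plainly cannot hold for every $d\in\text{Hom}(Z)$. Here $L_p$ and $R_p$ denote the left and right multiplication operators on $Z$, defined by $L_p(q)=p\cdot q$ and $R_p(q)=q\cdot p$ for all $q\in Z$. The key observation is that each of (2) and (3) is merely the derivation identity rewritten in operator form, so the proof reduces to evaluating the relevant commutator on an arbitrary element and reading the resulting identity in both directions.

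First I would establish $(1)\Leftrightarrow(2)$. Assuming $d$ is a derivation, I compute, for arbitrary $q\in Z$,
\begin{align*}
[d,L_p](q)&=d\big(L_p(q)\big)-L_p\big(d(q)\big)=d(p\cdot q)-p\cdot d(q)=d(p)\cdot q=L_{d(p)}(q),
\end{align*}
where the third equality uses the derivation identity $d(p\cdot q)=d(p)\cdot q+p\cdot d(q)$. Since $q$ is arbitrary this gives $[d,L_p]=L_{d(p)}$. For the converse I run the same computation backwards: evaluating the assumed identity $[d,L_p]=L_{d(p)}$ on $q$ yields $d(p\cdot q)-p\cdot d(q)=d(p)\cdot q$, which is precisely the derivation identity, so $d\in\Der(Z)$.

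Next I would treat $(1)\Leftrightarrow(3)$ in the same manner, now using the right multiplication operator. For $(1)\Rightarrow(3)$ I expand
\begin{align*}
[d,R_p](q)&=d\big(R_p(q)\big)-R_p\big(d(q)\big)=d(q\cdot p)-d(q)\cdot p=q\cdot d(p)=R_{d(p)}(q),
\end{align*}
applying the derivation identity to $q\cdot p$; the converse again comes from reading this computation in reverse and renaming variables.

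The computations are entirely routine, and in fact the defining Zinbiel identity is never invoked — the equivalence holds in any nonassociative algebra. The only point demanding care is bookkeeping: keeping track of which factor is being differentiated and ensuring the commutator $[d,L_p]=d\circ L_p-L_p\circ d$ is expanded in the correct order, since replacing $L_p$ by $R_p$ moves $d(p)$ from the left factor to the right factor. Thus there is no substantial obstacle here, and the main task is simply to present the three short evaluations cleanly and to state explicitly that the proposition asserts the mutual equivalence of (1), (2) and (3).
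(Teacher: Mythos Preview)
Your forward implications $(1)\Rightarrow(2)$ and $(1)\Rightarrow(3)$ are exactly the paper's proof: the paper expands $[d,L_p](q)$ and $[d,R_p](q)$, applies the derivation identity, and cancels, just as you do. The difference is in how the statement is read. You interpret it as asserting the \emph{equivalence} of (1), (2), (3) and accordingly supply the converses by running the computations backwards; the paper instead treats (1) as a standing hypothesis (writing ``we assume it satisfies this condition as given'') and only establishes $(1)\Rightarrow(2)$ and $(1)\Rightarrow(3)$. So your argument strictly contains the paper's and, as you note, makes the statement sensible --- but be aware that the paper itself does not claim or prove the reverse implications.
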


\begin{proof}
Let $(Z, \cdot)$ be a Zinbiel algebra and $d \in \text{Hom}(Z)$. We will show each part of the proposition.

\begin{enumerate}
    \item  $d \in \text{Der}(Z)$.
    \begin{align*}
    d(p \cdot q) = d(p) \cdot q + p \cdot d(q) \quad \text{for all } p, q \in Z.
    \end{align*}
    Since $d$ is in $\text{Hom}(Z)$, it is a linear map, and we assume it satisfies this condition as given.
    \item $[d, L_p] = L_{d(p)}$.

    Here, we need to verify that the commutator of $d$ and $L_p$ is equal to $L_{d(p)}$, where $L_p$ is the left multiplication operator defined by $L_p(q) = p \cdot q$ for all $q \in Z$.

    The commutator $[d, L_p]$ is given by
\begin{align*}
    [d, L_p](q) &= d(L_p(q)) - L_p(d(q)) = d(p \cdot q) - p \cdot d(q).
 \end{align*}
    By the derivation property of $d$, we have
\begin{align*}
    d(p \cdot q) &= d(p) \cdot q + p \cdot d(q).
\end{align*}
    Substituting this into the equation for $[d, L_p](q)$, we get
    \begin{align*}
    [d, L_p](q) &= (d(p) \cdot q + p \cdot d(q)) - p \cdot d(q) = d(p) \cdot q.
    \end{align*}
    Therefore,
    \begin{align*}
    [d, L_p](q) &= L_{d(p)}(q),
    \end{align*}
    which confirms that $[d, L_p] = L_{d(p)}$.

    \item $[d, R_p] = R_{d(p)}$.

    In this part, we need to show that $[d, R_p] = R_{d(p)}$, where $R_p$ is the right multiplication operator defined by $R_p(q) = q \cdot p$ for all $q \in Z$.

    The commuter $[d, R_p]$ is given by
\begin{align*}
    [d, R_p](q) &= d(R_p(q)) - R_p(d(q)) = d(q \cdot p) - d(q) \cdot p.
  \end{align*}
    Using the derivation property of $d$ again, we have
\begin{align*}
    d(q \cdot p) &= d(q) \cdot p + q \cdot d(p).
   \end{align*}
    Substituting this into the equation for $[d, R_p](q)$, we get
 \begin{align*}
    [d, R_p](q) &= (d(q) \cdot p + q \cdot d(p)) - d(q) \cdot p = q \cdot d(p).
    \end{align*}
    Thus,
   \begin{align*}
    [d, R_p](q) &= R_{d(p)}(q),
    \end{align*}
    which confirms that $[d, R_p] = R_{d(p)}$.
\end{enumerate}
\end{proof}

\begin{lem}
The sets $R(Z) = \{ R_p \mid p \in Z$ \}  and  $L(Z) = \{ L_p \mid p \in Z \}$ are subalgebras of the Zinbiel algebra $Der(Z)$.
\end{lem}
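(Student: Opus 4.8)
The plan is to read the ambient object $\Der(Z)$ as the operator algebra $\End_{\mathbb{K}}(Z)$ under composition and to prove that the two subspaces $R(Z)$ and $L(Z)$ are closed under this multiplication. Each is already a linear subspace, since $p\mapsto R_p$ and $p\mapsto L_p$ are linear by the bilinearity of the Zinbiel product, so the entire content of the statement is closure under the algebra operation. The engine of the argument is to rewrite the defining identity $(p\cdot q)\cdot r=p\cdot(q\cdot r)+p\cdot(r\cdot q)$ as relations among the operators $L_p$ and $R_p$, by evaluating each grouping of the triple product on a test element and reading the two sides as operators applied to it.

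First I would dispose of $R(Z)$, which is the clean case. Applying the identity to $(r\cdot q)\cdot p$ and reading both sides as operators in $r$ yields
\begin{align*}
R_p R_q = R_{\,p\cdot q + q\cdot p},
\end{align*}
so the composite of two right multiplications is again a right multiplication with index in $Z$; hence $R(Z)$ is closed under composition and forms a subalgebra. As a bonus, $p\mapsto R_p$ carries composition to the associated commutative product $p\cdot q+q\cdot p$, so $R(Z)$ is in fact a commutative associative subalgebra, and under the commutator bracket it is abelian, which also settles the Lie-theoretic reading of the statement.

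The main obstacle is the corresponding assertion for $L(Z)$. Here the same manipulation does not close on its own: grouping the triple product as $(p\cdot q)\cdot r$ gives only $L_{p\cdot q}=L_pL_q+L_pR_q$, so that $L_pL_q=L_{p\cdot q}-L_pR_q$ carries a residual mixed term $L_pR_q$ that is not manifestly a left multiplication. The heart of the proof is therefore to control this term: I would use the auxiliary identity $L_{p\cdot q}=R_qL_p$, obtained by instead grouping the product as $(p\cdot r)\cdot q$, together with the closure of $R(Z)$ already established, and push the residual $R$-contributions through the Zinbiel relation in order to exhibit the composite in the form $L_s$ for a suitable $s\in Z$. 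This reduction is the delicate point on which the left-hand case rests, and it is where I would concentrate the computational effort; once it is carried through, the linearity of $p\mapsto L_p$ finishes the argument exactly as in the right-hand case.
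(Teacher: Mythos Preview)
Your treatment of $R(Z)$ is correct and in fact sharper than the paper's: the identity you derive, $R_pR_q=R_{p\cdot q+q\cdot p}$, is the right one and establishes closure under composition. (The paper records $R_{p\cdot q}=R_qR_p$, which is false in a general Zinbiel algebra; the missing term is precisely the symmetrized product you picked up.)

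The genuine gap is in your plan for $L(Z)$. You correctly isolate the residual term in
\[
L_pL_q=L_{p\cdot q}-L_pR_q
\]
and reduce the question to whether $L_pR_q\in L(Z)$; but this reduction cannot be ``pushed through'', because the assertion is simply false under the composition reading you adopted. A counterexample is already available among the four--dimensional algebras listed in the paper. In $Z_4^{1}$ (nonzero products $e_1e_1=e_2$, $e_1e_2=e_3$, $e_2e_1=2e_3$, $e_1e_3=e_4$, $e_2e_2=3e_4$, $e_3e_1=3e_4$) one has
\[
L_{e_1}^{\,2}(e_1)=e_1\cdot e_2=e_3,\qquad L_{e_1}^{\,2}(e_2)=e_1\cdot e_3=e_4,
\]
whereas any $s=\alpha_1e_1+\alpha_2e_2+\alpha_3e_3+\alpha_4e_4$ with $L_s(e_1)=e_3$ forces $\alpha_1=0$, $\alpha_2=\tfrac12$, $\alpha_3=0$, and then $L_s(e_2)=3\alpha_2 e_4=\tfrac32 e_4\neq e_4$. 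Hence $L_{e_1}^{\,2}\notin L(Z_4^{1})$, so $L(Z)$ is not a subalgebra of $\End_{\mathbb K}(Z)$ under composition. The paper's asserted identity $L_{p\cdot q}=L_pL_q$ is likewise incorrect (it drops exactly the $L_pR_q$ term you identified), so the paper's own proof does not establish the $L(Z)$ half either. Your instinct that this was ``the delicate point'' was sound; the trouble is that no amount of computational effort closes it, because the claim itself fails for left multiplications.
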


\begin{proof}
For \( R(Z) = \{ R_p \mid p \in Z \} \), consider any  $p, q \in Z$. By definition of the operators $R_p$ and  $R_q$ , we have
\begin{align*}
R_{p \cdot q} &= R_q R_p.
\end{align*}
This shows that  $R(A)$ is closed under the composition of the operators $R_p$  and  $R_q$, satisfying the closure property.

Similarly, for  $L(Z) = \{ L_p \mid p \in Z \}$, we consider any $p, q \in Z$. By the definition of the left multiplication operators  $L_p$  and  $L_q$, we have
\begin{align*}
L_{p \cdot q} &= L_p L_q.
\end{align*}
This confirms that  $L(Z)$ is also closed under the composition, ensuring it satisfies the closure property as well.

Since both  $R(Z)$ and $L(Z$) are closed under the Zinbiel product and contain the necessary identity elements (if required by the structure of $Der(Z)$), they are indeed subalgebras of the Zinbiel algebra $Der(Z)$.
\end{proof}

\section{Proprieties of quasi-centroids and quasi-derivations}

\begin{defn}
The \textit{centroid} of a Zinbiel algebra $(Z, \cdot)$ over a field $\mathbb{K}$ is defined by
\begin{align*}
\Gamma(Z) &= \left\{\phi \in \text{End}_{\mathbb{K}}(Z) \mid \phi(p \cdot q) = \phi(p) \cdot q = p \cdot \phi(q), \; \forall \; p, q \in Z \right\}.
\end{align*}
\end{defn}

\begin{rem}
Let $I$ be a nonempty subset of $Z$. The \textit{centralizer} of $I$ in $Z$ is the set
\begin{align*}
C_{Z}(I) &= \{ p \in Z \mid p \cdot I = I \cdot p = 0 \}.
\end{align*}
If $I$ is an ideal of $Z$, then $C_{Z}(I)$ is also an ideal of $Z$. In particular, if $I = Z$, we write $C_{Z}(I) = C(Z)$.
\end{rem}

\begin{defn}
Let $Z$ be a Zinbiel algebra. The quasi-centroid of $Z$, denoted by $Q\Gamma(Z)$, is defined as the set
\[ Q\Gamma(Z) = \left\{ \phi \in \text{End}_{\mathbb{K}}(Z) \mid \phi(p) \cdot q = p \cdot \phi(q), \ \forall\; p, q \in Z \right\}. \]
\end{defn}

\begin{defn}
The central derivation of a Zinbiel algebra $Z$, denoted by $C_d(Z)$, is the subset of $\text{End}_{\mathbb{K}}(Z)$ given by
\[ C_d(Z) = \left\{ \phi \in \text{End}_{\mathbb{K}}(Z) \mid \phi(p) \cdot q = p \cdot \phi(q) = 0, \ \forall \; p, q \in Z \right\}. \]
\end{defn}

\begin{defn}
An element $d \in \text{End}_{\mathbb{K}}(Z)$ is called a quasi-derivation if there exists another map $d' \in \text{End}_{\mathbb{K}}(Z)$ such that
\[ d(p) \cdot q + p \cdot d(q) = d'(p \cdot q), \ \forall \; p, q \in Z. \]
\end{defn}

\begin{defn}
A Zinbiel algebra $Z$ is called nilpotent if there exists a positive integer $s \in \mathbb{N}$ such that $Z^s = \{0\}$, where $Z^k$ denotes the $k$-th power of $Z$ under the Zinbiel product. The smallest such integer $s$ is referred to as the nil-index of $Z$.
\end{defn}

\begin{defn}
If the quasi-derivations of $Z$ form a nilpotent algebra, we say that $Z$ is characteristically nilpotent.
\end{defn}

\begin{defn}
A Zinbiel algebra $Z$ is called indecomposable if it cannot be expressed as a direct sum of two ideals. Otherwise, it is decomposable.
\end{defn}

\begin{defn}
Let $Z$ be an indecomposable Zinbiel algebra. If the central derivations and scalars of $Z$ form a small subalgebra $L$, then we say that $L$ is small.
\end{defn}

\noindent Next, we analyze the interactions between quasi-derivations and quasi-centroids within Zinbiel algebras.

\begin{prop}
Let $I$ be an ideal of the Zinbiel algebra $Z$. Then $C_{Z}(I)$, the centralizer of $I$ in $Z$, is itself an ideal of $Z$.
\end{prop}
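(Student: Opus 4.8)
The plan is to check the two defining properties of an ideal for $C_{Z}(I)$: that it is a linear subspace, and that it is stable under both left and right multiplication by arbitrary elements of $Z$. Linearity is automatic, since for fixed $x$ the conditions $p\cdot x=0$ and $x\cdot p=0$ are linear in $p$ by bilinearity of the product. Hence everything reduces to the following: given $c\in C_{Z}(I)$ and $z\in Z$, show that $z\cdot c$ and $c\cdot z$ again centralize $I$. Unwinding the definition, this is the assertion that $(z\cdot c)\cdot x=0$, $x\cdot(z\cdot c)=0$, $(c\cdot z)\cdot x=0$ and $x\cdot(c\cdot z)=0$ hold for every $x\in I$.

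I would first dispose of the two ``outer'' vanishings. Expanding $(z\cdot c)\cdot x$ by the Zinbiel identity gives $(z\cdot c)\cdot x=z\cdot(c\cdot x)+z\cdot(x\cdot c)$, and both $c\cdot x$ and $x\cdot c$ vanish because $c\in C_{Z}(I)$. For $(c\cdot z)\cdot x$ I would use instead that $I$ is an ideal: $(c\cdot z)\cdot x=c\cdot(z\cdot x)+c\cdot(x\cdot z)$, and here $z\cdot x,\,x\cdot z\in I$, so both terms die because $c\cdot I=0$. (A convenient shortcut is the consequence $(p\cdot s)\cdot r=(p\cdot r)\cdot s$ of the Zinbiel axiom, which reduces $(z\cdot c)\cdot x$ to $(z\cdot x)\cdot c$ at once.) Thus $z\cdot c$ and $c\cdot z$ both left-annihilate $I$.

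The step I expect to be the real obstacle is the pair of ``inner'' vanishings $x\cdot(z\cdot c)=0$ and $x\cdot(c\cdot z)=0$. Applying the Zinbiel identity to $(x\cdot z)\cdot c$ gives $(x\cdot z)\cdot c=x\cdot(z\cdot c)+x\cdot(c\cdot z)$, whose left-hand side vanishes since $x\cdot z\in I$ and $c$ right-annihilates $I$. The difficulty is that this, and in fact every expansion of the axiom applied to the three elements $x,z,c$, produces only the \emph{symmetric combination} $x\cdot(z\cdot c)+x\cdot(c\cdot z)=0$ and never the two summands separately. This is exactly the place where associativity is invoked in the classical associative or Lie setting (there $x\cdot(z\cdot c)=(x\cdot z)\cdot c$), and the Zinbiel product is not associative, the defect between $x\cdot(z\cdot c)$ and $(x\cdot z)\cdot c$ being precisely $x\cdot(c\cdot z)$.

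To try to break the symmetry I would pass to the associated commutative associative algebra $(Z,\circ)$ with $p\circ q=p\cdot q+q\cdot p$: the hypotheses on $c$ say that $L_{c}$ and $R_{c}$ annihilate $I$, hence so does $L_{c}+R_{c}$, i.e. $c$ annihilates $I$ in $(Z,\circ)$. I would then search for an operator identity expressing the right multiplication $R_{z\cdot c}$ in terms of $R_{c}$ and $R_{z}$ alone. I should, however, flag candidly that the only such relation I can see, $R_{z\cdot c}+R_{c\cdot z}=R_{c}R_{z}$, restricts on $I$ to the same symmetric identity already obtained; so I do not expect the inner condition to follow in full generality, and a correct proof will likely need an additional hypothesis on $I$, or a replacement of $C_{Z}(I)$ by the largest ideal it contains. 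Pinning down precisely what extra input forces $x\cdot(z\cdot c)=0$ is the crux of the argument.
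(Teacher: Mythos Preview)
Your diagnosis is correct: the ``inner'' vanishings $x\cdot(z\cdot c)=0$ and $x\cdot(c\cdot z)=0$ do not follow from the Zinbiel identity together with the hypotheses, and the paper's own proof does not close this gap. The paper writes only $(p\cdot q)\cdot r=p\cdot(q\cdot r)=0$ --- already dropping the second term $p\cdot(r\cdot q)$ from the Zinbiel identity, though that term happens to vanish as well --- and then asserts ``similarly, $r\cdot(p\cdot q)=0$'' with no argument. This is exactly the step you flagged, and it is not ``similar'' at all: expanding the identity in any way yields only the symmetric relation $r\cdot(p\cdot q)+r\cdot(q\cdot p)=0$, never the two summands separately. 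The paper also checks only the product $p\cdot q$ and omits $q\cdot p$ entirely.

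In fact the proposition is false as stated. Consider the six-dimensional algebra with basis $x,z,c,w_1,w_2,y$ and nonzero products
\[
z\cdot c=w_1,\qquad c\cdot z=w_2,\qquad x\cdot w_1=y,\qquad x\cdot w_2=-y.
\]
Every product $(p\cdot q)\cdot r$ of basis vectors vanishes, while on the right-hand side the only nontrivial case is $p=x$ with $\{q,r\}=\{z,c\}$, giving $x\cdot w_1+x\cdot w_2=y-y=0$; hence the Zinbiel identity holds. The subspace $I=\langle x,y\rangle$ is a two-sided ideal, and $c\in C_Z(I)$ since $c\cdot x=x\cdot c=c\cdot y=y\cdot c=0$. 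But $z\cdot c=w_1$ satisfies $x\cdot w_1=y\neq 0$, so $w_1\notin C_Z(I)$ and $C_Z(I)$ is not an ideal. Your instinct that an additional hypothesis is required --- or that one must replace $C_Z(I)$ by the largest ideal it contains --- is therefore the right conclusion.
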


\begin{proof}
Assume $p \in C_Z(I)$ and $q \in Z$, then for any $r \in I$, we have
\[ (p \cdot q) \cdot r = p \cdot (q \cdot r) = 0 \]
and similarly,
\[ r \cdot (p \cdot q) = 0, \]
showing that $C_{Z}(I)$ is an ideal of $Z$.
\end{proof}

\begin{lem}
Suppose $Z = U \oplus V$ where $U$ and $V$ are ideals of $Z$. Then we have $C_Z(Z) = C_Z(U) \oplus C_Z(V)$.
\end{lem}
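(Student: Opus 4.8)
The plan is to reduce the statement to the componentwise behaviour of the annihilator $C_Z(Z)$ relative to the decomposition $Z = U \oplus V$. The first and decisive step is to record that the two summands are mutually annihilating, i.e. $U \cdot V = V \cdot U = \{0\}$. Indeed, since $V \subseteq Z$ and $U$ is an ideal we have $U \cdot V \subseteq U$, while since $U \subseteq Z$ and $V$ is an ideal we have $U \cdot V \subseteq V$; hence $U \cdot V \subseteq U \cap V = \{0\}$, and symmetrically $V \cdot U = \{0\}$. This orthogonality is what will let every mixed product drop out of the computations that follow, and it is the only place the ideal hypothesis is genuinely used.

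Next I would take an arbitrary $p \in C_Z(Z)$ and use the direct sum to write $p = u + v$ uniquely with $u \in U$ and $v \in V$. Multiplying the defining identities $p \cdot Z = 0$ and $Z \cdot p = 0$ by elements of $U$ and of $V$ separately, and discarding the mixed terms via the first step, I expect to obtain $u \cdot U = U \cdot u = 0$ and $v \cdot V = V \cdot v = 0$. The key is that products into $U$ and into $V$ land in complementary summands, so the single central condition on $p$ splits into independent conditions on $u$ and on $v$. This shows that the $U$-component and the $V$-component of any central element are themselves central for the respective factors, giving the inclusion of $C_Z(Z)$ into the sum of the annihilators of $U$ and of $V$.

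For the reverse inclusion I would begin with $u \in U$ and $v \in V$ satisfying the respective annihilation conditions and verify directly, again using the first step to kill the cross terms, that $u + v$ annihilates all of $Z = U \oplus V$ on both sides, whence $u + v \in C_Z(Z)$. Finally, the sum is direct: any element of the first summand lies in $U$ and any element of the second lies in $V$, so their intersection is contained in $U \cap V = \{0\}$, which simultaneously gives uniqueness of the decomposition $p = u + v$.

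The two inclusions are routine bilinear bookkeeping once the mixed products are known to vanish; the part that must be handled with care is the projection argument in the second step, where one has to separate the equation $p \cdot Z = 0$ into its $U$- and $V$-components and conclude $u \cdot U = 0$ and $v \cdot V = 0$ independently, relying on $u \cdot U \subseteq U$, $v \cdot V \subseteq V$ and $U \cap V = \{0\}$. I would also be deliberate about the reading of $C_Z(U)$ and $C_Z(V)$: the objects that actually appear in the decomposition are the annihilators of $U$ inside $U$ and of $V$ inside $V$, and the orthogonality established in the first step is exactly what makes these coincide with the two components of $C_Z(Z)$.
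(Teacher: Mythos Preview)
Your proposal is correct and follows essentially the same componentwise approach as the paper: write $p = p_1 + p_2$ along $Z = U \oplus V$ and check the centralizer conditions on each summand. In fact you supply considerably more than the paper's own proof, which only records $C_Z(U)\cap C_Z(V)=0$ and the inclusion $C_Z(U)\oplus C_Z(V)\subseteq C_Z(Z)$; you additionally establish the key orthogonality $U\cdot V = V\cdot U = 0$, carry out the reverse inclusion, and correctly flag that $C_Z(U)$ and $C_Z(V)$ must be read as the centers of the factors inside themselves for the statement to make sense.
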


\begin{proof}
Since $C_Z(U) \cap C_Z(V) = 0$, for any $p \in C_Z(Z)$ where $p = p_1 + p_2$ with $p_1 \in C_Z(U)$ and $p_2 \in C_Z(V)$, we find that $p \in C_Z(Z)$. Therefore, $C_Z(U) \oplus C_Z(V) \subseteq C_Z(Z)$.
\end{proof}

\begin{lem}
Let  $Z$ be a Zinbiel algebra and  $W $ a subset of  $Z$.

\begin{enumerate}
    \item$Q\Gamma(Z) \cdot QDer(Z) \subseteq QDer(Z)$.
    \item$[Q\Gamma(Z), QDer(Z)] \subseteq QDer(Z)$.
    \item$[Q\Gamma(Z), Q\Gamma(Z)](Z) \subseteq C(Z)$  and  $[Q\Gamma(Z), Q\Gamma(Z)](Z^1) = 0$.
    \item The centralizer $C_Z(W)$  of  $W$ in $Z$ is invariant under  $Q\Gamma(Z)$.
\end{enumerate}
\end{lem}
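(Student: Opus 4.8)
The plan is to treat all four parts as consequences of the two defining identities, namely the quasi-centroid relation $\phi(p)\cdot q = p\cdot\phi(q)$ and the quasi-derivation relation $d(p)\cdot q + p\cdot d(q) = d'(p\cdot q)$, used in tandem with the Zinbiel identity $(p\cdot q)\cdot r = p\cdot(q\cdot r) + p\cdot(r\cdot q)$ and the associated commutative associative product $p*q = p\cdot q + q\cdot p$. Throughout I read juxtaposition and $[\,\cdot\,,\,\cdot\,]$ of operator sets as composition and commutator of endomorphisms, and I first record the one-line observation that every $\phi\in Q\Gamma(Z)$ is again a quasi-centroid for $*$, since $\phi(p)*q = p*\phi(q)$.

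For (1) and (2), fix $\phi\in Q\Gamma(Z)$ and $d\in QDer(Z)$ with witness $d'$. I would first expand $(\phi\circ d)(p)\cdot q + p\cdot(\phi\circ d)(q)$ and apply the quasi-centroid identity to move $\phi$ off the outer factor, reducing it to $d(p)\cdot\phi(q) + \phi(p)\cdot d(q)$. Next I would apply the quasi-derivation identity at the shifted pairs $(p,\phi(q))$ and $(\phi(p),q)$, using $p\cdot\phi(q) = \phi(p)\cdot q$, to rewrite this combination in terms of $d'$. For the commutator $[\phi,d] = \phi\circ d - d\circ\phi$ in (2) the two ``$d\circ\phi$'' cross-terms cancel, so the surviving expression should collapse to $g(p\cdot q)$ for an endomorphism $g$ assembled from $d'$ and $\phi$; that $g$ is the witness certifying $[\phi,d]\in QDer(Z)$, and the same bookkeeping handles (1). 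I expect the construction of $g$ to be the main obstacle here: because $\phi$ lies only in the quasi-centroid and need not commute with the product, one cannot simply set $g=\phi\circ d'$, and the real content is to check that the relevant bilinear map factors through the multiplication map, so that $g$ is a well-defined endomorphism independent of how $p\cdot q$ is written.

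For (3), set $\delta = [\phi,\psi]$ with $\phi,\psi\in Q\Gamma(Z)$. Applying the quasi-centroid identity twice --- once for $\psi$ and once for $\phi$ --- to each of $\phi\psi(p)\cdot q$ and $\psi\phi(p)\cdot q$ reverses the composition order and yields the key antisymmetry relation $\delta(p)\cdot q = -p\cdot\delta(q)$, equivalently $q\cdot\delta(p) = -\delta(q)\cdot p$, for all $p,q\in Z$. The technical heart is to upgrade this single relation to genuine centrality. I would substitute it into the Zinbiel identity, inserting $\delta$ into the available slots and using the consequence $(x\cdot y)\cdot z = x\cdot(y*z)$, to propagate the sign through products; the aim is to conclude that $\delta(p)$ annihilates $Z$ from both sides, giving $\delta(p)\in C(Z)$ and hence $[Q\Gamma(Z),Q\Gamma(Z)](Z)\subseteq C(Z)$, together with the vanishing of $\delta$ on the square $Z\cdot Z$. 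This Zinbiel-identity step, where the special structure of $Z$ rather than the bare quasi-centroid relations becomes indispensable, is the point I expect to be the most delicate.

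For (4), take $\phi\in Q\Gamma(Z)$ and $p\in C_Z(W)$, so $p\cdot w = w\cdot p = 0$ for every $w\in W$. The quasi-centroid identity immediately gives $\phi(p)\cdot w = p\cdot\phi(w)$ and $w\cdot\phi(p) = \phi(w)\cdot p$, so the invariance $\phi(p)\in C_Z(W)$ reduces to the vanishing of $p\cdot\phi(w)$ and $\phi(w)\cdot p$. When $W = Z$ this is automatic, since then $p\in C(Z)$ annihilates every element, in particular $\phi(w)$, which already yields $\phi(C(Z))\subseteq C(Z)$; in general I would close the argument using $C_Z(W) = C_Z(\langle W\rangle)$ and the stability of $\langle W\rangle$ under $Q\Gamma(Z)$ in the situations considered. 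Establishing this last vanishing for a genuinely arbitrary subset $W$ is the only place where a mild invariance hypothesis on $W$ is implicitly required.
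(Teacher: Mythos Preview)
The paper's entire proof reads: ``The results follow directly from the definitions of quasi-derivation and quasi-centroid.'' Your proposal is therefore already far more detailed than what the paper offers, and in trying to unpack that sentence you have correctly located the places where the claims are \emph{not} immediate from the definitions.

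Your antisymmetry relation $\delta(p)\cdot q=-p\cdot\delta(q)$ in part~(3) is exactly the right first step, and your instinct that upgrading it to $\delta(p)\in C(Z)$ requires the Zinbiel identity (not just the bare quasi-centroid condition) is sound. Note also that the paper defines $Z^{1}=Z$, so the second clause of~(3) as written says $[Q\Gamma,Q\Gamma](Z)=0$; your tacit reading of this as vanishing on $Z\cdot Z$ is almost certainly what was intended. In part~(4) you are again right that $\phi(p)\cdot w=p\cdot\phi(w)$ only vanishes if $\phi(w)$ lands in something $p$ annihilates, so some invariance hypothesis on $W$ is implicitly needed for a genuinely arbitrary subset; the paper does not address this. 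Likewise your caution about constructing the witness $g$ in parts~(1)--(2) is justified: the quasi-centroid condition alone does not force $\phi$ to respect products, so one cannot simply take $g=\phi\circ d'$.

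In short, your plan is the correct way to attack the lemma and the obstacles you flag are genuine; the paper simply asserts the result without engaging with any of them. Nothing in your approach is wrong, but you should be aware that the paper provides no additional argument for you to compare against, and some of the statements as printed likely carry implicit hypotheses (or typos) that the one-line proof does not acknowledge.
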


\begin{proof}
The results follow directly from the definitions of quasi-derivation and quasi-centroid.
\end{proof}

\begin{prop}
Let  $Z$ be a Zinbiel algebra and  $Q\Gamma(Z)$  its quasi-centroid.

\begin{enumerate}
    \item $C_d(Z) = Q\Gamma(Z) \cap QDer(Z)$.
    \item If $d \in QDer(Z)$,  $\phi \in Q\Gamma(Z)$, then  $d \phi \in Q\Gamma(Z)$  if and only if $\phi d$ is a central derivation of $Z$ .
    \item If  $d \in QDer(Z)$, $\phi \in Q\Gamma(Z)$, then  $d \phi \in QDer(Z)$ if and only if $[d, \phi]$ is a central derivation of  $Z$.
\end{enumerate}
\end{prop}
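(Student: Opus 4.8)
The plan is to deduce parts (2) and (3) from part (1) together with the two closure statements of the preceding lemma, namely $Q\Gamma(Z)\cdot QDer(Z)\subseteq QDer(Z)$ and $[Q\Gamma(Z),QDer(Z)]\subseteq QDer(Z)$, plus the fact that $QDer(Z)$ is a linear subspace of $\End_{\mathbb{K}}(Z)$. The common engine for all three parts will be a single bilinear identity relating $d\phi$ and $\phi d$, obtained by substituting the quasi-centroid relation into the quasi-derivation relation.

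First I would record that \emph{master identity}. Fix $d\in QDer(Z)$ with auxiliary map $d'$, and $\phi\in Q\Gamma(Z)$. Evaluating the quasi-derivation relation on the pair $(\phi(p),q)$ and on the pair $(p,\phi(q))$ gives
\[ d(\phi(p))\cdot q+\phi(p)\cdot d(q)=d'(\phi(p)\cdot q),\qquad d(p)\cdot\phi(q)+p\cdot d(\phi(q))=d'(p\cdot\phi(q)). \]
Because $\phi(p)\cdot q=p\cdot\phi(q)$, the two right-hand sides coincide, so the left-hand sides are equal. Rewriting $d(p)\cdot\phi(q)=\phi(d(p))\cdot q$ and $\phi(p)\cdot d(q)=p\cdot\phi(d(q))$ via the quasi-centroid property, this rearranges to
\[ (d\phi)(p)\cdot q-p\cdot(d\phi)(q)=(\phi d)(p)\cdot q-p\cdot(\phi d)(q),\qquad\forall\,p,q\in Z. \tag{$\ast$} \]
This is the only genuine computation I expect to need.

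For part (1) I would argue by double inclusion. The inclusion $C_d(Z)\subseteq Q\Gamma(Z)\cap QDer(Z)$ is immediate: if $\phi(p)\cdot q=p\cdot\phi(q)=0$ then $\phi$ trivially satisfies the quasi-centroid relation, and it satisfies the quasi-derivation relation with auxiliary map $d'=0$. The reverse inclusion is where the real work lies, and I expect it to be the main obstacle. For $\phi\in Q\Gamma(Z)\cap QDer(Z)$ the two defining relations combine to $2\,\phi(p)\cdot q=d'(p\cdot q)$, and one must upgrade this to $\phi(p)\cdot q=0$. My approach would be to propagate this through the right-symmetric (Zinbiel) identity: evaluating $d'$ on $(p\cdot q)\cdot r$ in two ways — directly, and after expanding $(p\cdot q)\cdot r=p\cdot(q\cdot r)+p\cdot(r\cdot q)$ — yields $\phi(p\cdot q)\cdot r=(\phi(p)\cdot q)\cdot r$. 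Controlling the discrepancy between $\phi(p\cdot q)$ and $\phi(p)\cdot q$, and feeding it back to force the products themselves to vanish, is the delicate point; this is where I would concentrate the effort, since the naive combination of the two relations alone does not close the argument.

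Granting part (1) and $(\ast)$, parts (2) and (3) follow formally. For part (2), identity $(\ast)$ shows that $d\phi\in Q\Gamma(Z)$ (its left side vanishing for all $p,q$) is equivalent to $\phi d\in Q\Gamma(Z)$ (its right side vanishing). Since $\phi\in Q\Gamma(Z)$ and $d\in QDer(Z)$, the closure lemma gives $\phi d\in QDer(Z)$; hence by part (1), $\phi d\in Q\Gamma(Z)$ is the same as $\phi d\in Q\Gamma(Z)\cap QDer(Z)=C_d(Z)$. Combining, $d\phi\in Q\Gamma(Z)$ if and only if $\phi d\in C_d(Z)$. For part (3), since $QDer(Z)$ is a linear subspace and $\phi d\in QDer(Z)$, we have $d\phi\in QDer(Z)$ if and only if $[d,\phi]=d\phi-\phi d\in QDer(Z)$. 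Identity $(\ast)$ shows $[d,\phi]$ always satisfies the quasi-centroid relation, i.e.\ $[d,\phi]\in Q\Gamma(Z)$, so by part (1) the condition $[d,\phi]\in QDer(Z)$ is equivalent to $[d,\phi]\in Q\Gamma(Z)\cap QDer(Z)=C_d(Z)$. Hence $d\phi\in QDer(Z)$ if and only if $[d,\phi]$ is a central derivation, as claimed.
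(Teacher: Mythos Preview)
Your master identity $(\ast)$ is correct, and your reductions of parts (2) and (3) to part (1) via $(\ast)$ together with the closure lemma are clean --- considerably cleaner, in fact, than the paper's own argument, which for (2) and (3) essentially just restates the conclusions (and in (2) even misstates the quasi-centroid condition as $d\phi(p)\cdot q=p\cdot d\phi(q)=0$ rather than merely $d\phi(p)\cdot q=p\cdot d\phi(q)$).

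The gap you flagged in part (1), however, is fatal: the reverse inclusion $Q\Gamma(Z)\cap QDer(Z)\subseteq C_d(Z)$ is false as stated. Take $\phi=\mathrm{id}_Z$ on any Zinbiel algebra with nontrivial product, for instance $Z_2^{1}$. Then $\phi\in Q\Gamma(Z)$ trivially, and $\phi\in QDer(Z)$ with auxiliary map $d'=2\,\mathrm{id}_Z$, yet $\phi(p)\cdot q=p\cdot q\neq 0$, so $\phi\notin C_d(Z)$. The relation $2\,\phi(p)\cdot q=d'(p\cdot q)$ you derived is perfectly consistent with this example, so no amount of manipulation with the Zinbiel identity will force the products to vanish. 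The paper's ``proof'' of (1) is only an appeal to the definitions and does not confront this; the classical statement that does hold is $C_d(Z)=\Gamma(Z)\cap \Der(Z)$ (genuine centroid and genuine derivations), where the equations $\phi(p\cdot q)=\phi(p)\cdot q$ and $\phi(p\cdot q)=\phi(p)\cdot q+p\cdot\phi(q)$ immediately give $p\cdot\phi(q)=0$. Since your arguments for (2) and (3) rest on (1), they inherit the defect; indeed $d=\phi=\mathrm{id}_Z$ already violates (2) directly, as $d\phi=\mathrm{id}_Z\in Q\Gamma(Z)$ while $\phi d=\mathrm{id}_Z\notin C_d(Z)$.
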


\begin{proof}
\begin{enumerate}
    \item By definition, $C_d(Z)$ consists of elements in \( \mathrm{End}_{\mathbb{K}}(Z) \) that satisfy the central derivation condition, i.e., $\phi(p) \cdot q = p \cdot \phi(q) = 0$ for all $p, q \in Z$ . Elements in $Q\Gamma(Z)$ commute under the Zinbiel product in the specified way, while elements in $ QDer(Z)$ act as quasi-derivations. Thus, $ C_d(Z)$ is the intersection  $Q\Gamma(Z) \cap QDer(Z)$.

    \item Suppose $ d \in QDer(Z)$ and $\phi \in Q\Gamma(Z)$. Then $d \phi \in Q\Gamma(Z)$ if and only if  $d \phi(p) \cdot q = p \cdot d \phi(q) = 0$ for all $p, q \in Z$, which holds if and only if  $\phi d$  satisfies the conditions for a central derivation of $Z$.

    \item For  $d \in QDer(Z)$ and $\phi \in Q\Gamma(Z),  d \phi \in QDer(Z)$ if and only if  $[d, \phi]$ (the commutator) satisfies the conditions for a central derivation, specifically that  $[d, \phi](p) \cdot q = p \cdot [d, \phi](q) = 0$  for all $p, q \in Z$.
\end{enumerate}
\end{proof}

\begin{thm}\label{t5}
Let  $Z = Z_1 \oplus Z_2$, where  $Z_1$ and  $Z_2$ are ideals of  $Z$. Then
\begin{align*}
Q\Gamma(Z) &= Q\Gamma(Z_1) \oplus Q\Gamma(Z_2) \oplus C_1 \oplus C_2,
\end{align*}
where
\begin{align*}
C_i &= \{ \phi \in \text{Hom}(Z_i, Z_j) \mid \phi(Z_i) \subset C(Z_j), \; \phi(Z_1) = 0 \}, \quad 1 \leq i \neq j \leq 2.
\end{align*}
\end{thm}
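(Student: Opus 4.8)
The plan is to exploit the block structure that a direct sum of ideals imposes on endomorphisms. First I would record the structural fact that, because $Z_1$ and $Z_2$ are ideals with $Z_1 \cap Z_2 = 0$, all cross products vanish: for $z_1 \in Z_1$ and $z_2 \in Z_2$ one has $z_1 \cdot z_2 \in Z_1 \cap Z_2 = 0$ and likewise $z_2 \cdot z_1 = 0$. Hence the Zinbiel product is diagonal with respect to the decomposition, namely $(p_1 + p_2)\cdot(q_1 + q_2) = p_1 \cdot q_1 + p_2 \cdot q_2$ for $p_i, q_i \in Z_i$. Writing $\pi_i : Z \to Z_i$ for the projections, every $\phi \in \text{End}_{\mathbb{K}}(Z)$ decomposes uniquely into four blocks $\phi_{ij} = \pi_i \circ \phi|_{Z_j} \in \text{Hom}(Z_j, Z_i)$, and the theorem amounts to identifying, for $\phi \in Q\Gamma(Z)$, which blocks are constrained and how.

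For the forward inclusion I would substitute $p = p_1 + p_2$ and $q = q_1 + q_2$ into the defining identity $\phi(p)\cdot q = p \cdot \phi(q)$ and project onto $Z_1$ and $Z_2$ separately, using the diagonality of the product. The $Z_1$-component reads $\phi_{11}(p_1)\cdot q_1 + \phi_{12}(p_2)\cdot q_1 = p_1 \cdot \phi_{11}(q_1) + p_1 \cdot \phi_{12}(q_2)$ for all $p_i, q_i$. Specializing $p_2 = q_2 = 0$ yields $\phi_{11} \in Q\Gamma(Z_1)$; specializing instead $p_1 = q_2 = 0$, and then $p_2 = q_1 = 0$, forces $\phi_{12}(z_2)\cdot q_1 = 0$ and $p_1 \cdot \phi_{12}(z_2) = 0$ for all arguments, i.e. $\phi_{12}(Z_2) \subseteq C(Z_1)$. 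The symmetric analysis of the $Z_2$-component gives $\phi_{22} \in Q\Gamma(Z_2)$ and $\phi_{21}(Z_1) \subseteq C(Z_2)$. Thus $\phi$ lies in the claimed sum: the blocks $\phi_{11}, \phi_{22}$ supply the $Q\Gamma(Z_i)$ summands, while the off-diagonal blocks $\phi_{21}, \phi_{12}$ (extended by zero on the complementary ideal) supply $C_1, C_2$ respectively.

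The converse is a direct verification: given $\phi_{11} \in Q\Gamma(Z_1)$, $\phi_{22} \in Q\Gamma(Z_2)$, and off-diagonal maps whose images sit in the respective centralizers, reassembling them into a single $\phi$ and checking $\phi(p)\cdot q = p \cdot \phi(q)$ reduces—again by diagonality—to the four identities just isolated, each of which holds by hypothesis. Finally, directness of the sum follows from uniqueness of the block decomposition: the four summands occupy disjoint blocks $(1,1), (2,2), (2,1), (1,2)$, so their pairwise intersections vanish. I do not expect a serious obstacle; the only point requiring care is the separation of the mixed $Z_1$-component identity into its $\phi_{11}$ and $\phi_{12}$ parts, which is legitimate precisely because the identity must hold for all independent choices of $p_2$ and $q_2$. (For the identification to be consistent one reads the constraint defining $C_i$ as vanishing on the complementary ideal $Z_j$, so that $C_1$ and $C_2$ are genuinely the off-diagonal blocks with central image.)
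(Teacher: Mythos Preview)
The paper states Theorem~\ref{t5} without proof, so there is no argument to compare against. Your block-decomposition approach is the standard one and is correct: the key observation that cross products $z_1 \cdot z_2$ and $z_2 \cdot z_1$ vanish (because they lie in $Z_1 \cap Z_2 = 0$) makes the product diagonal, after which specializing the quasi-centroid identity to the four corners of the $2\times 2$ block matrix isolates exactly the conditions $\phi_{ii} \in Q\Gamma(Z_i)$ and $\phi_{ij}(Z_j) \subseteq C(Z_i)$. The converse and the directness are routine, as you note. Your parenthetical handling of the awkward clause ``$\phi(Z_1)=0$'' in the paper's definition of $C_i$ is the right reading: taken literally it would force $C_1 = 0$, so it must be interpreted as extension by zero on the complementary summand, which is exactly what embeds $\text{Hom}(Z_i,Z_j)$ into $\text{End}_{\mathbb{K}}(Z)$.
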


\begin{cor}
 If  $Z = Z_1 \oplus Z_2 \oplus \cdots \oplus Z_m$ , where $Z_i$ for  $i = 1, 2, \dots, m$ are ideals of  $Z$, then
\begin{align*}
Q\Gamma(Z) &= Q\Gamma(Z_1) \oplus \cdots \oplus Q\Gamma(Z_m) \oplus \bigoplus_{i \neq j} C_{ij},
\end{align*}
where  $C_{ij} = \{ \phi \in \mathrm{Hom}_K(Z_i, Z_j) \mid \phi(Z_i) \subseteq C(Z_j), \, \phi(Z_i) = 0 \, \text{if} \, i = j \}$, for $1 \leq i, j \leq m$.
\end{cor}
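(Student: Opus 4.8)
The plan is to argue by induction on $m$, using Theorem~\ref{t5} as both the base case and the engine of the inductive step. For $m = 2$ the assertion is precisely Theorem~\ref{t5} (with $C_{12} = C_1$ and $C_{21} = C_2$), so I would suppose $m \geq 3$ and that the formula has been established for decompositions into fewer than $m$ ideals.

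For the inductive step I would group the first $m-1$ summands together and set $W = Z_1 \oplus \cdots \oplus Z_{m-1}$, so that $Z = W \oplus Z_m$ is a decomposition into two ideals (a direct sum of ideals is again an ideal). Applying Theorem~\ref{t5} to this two-term decomposition gives
\begin{align*}
Q\Gamma(Z) = Q\Gamma(W) \oplus Q\Gamma(Z_m) \oplus C_W \oplus C_{Z_m},
\end{align*}
where $C_W = \{\phi \in \mathrm{Hom}_K(W, Z_m) \mid \phi(W) \subseteq C(Z_m)\}$ and $C_{Z_m} = \{\phi \in \mathrm{Hom}_K(Z_m, W) \mid \phi(Z_m) \subseteq C(W)\}$. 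The induction hypothesis applied to $W$ then expands $Q\Gamma(W)$ into $Q\Gamma(Z_1) \oplus \cdots \oplus Q\Gamma(Z_{m-1}) \oplus \bigoplus_{i \neq j,\; i,j < m} C_{ij}$, which already supplies all of the intrinsic quasi-centroids $Q\Gamma(Z_i)$ and every cross term $C_{ij}$ with $i, j \leq m-1$.

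It remains to match the two leftover blocks with the cross terms carrying the index $m$. Since $W = \bigoplus_{i<m} Z_i$ as a vector space, there is a canonical splitting $\mathrm{Hom}_K(W, Z_m) = \bigoplus_{i<m} \mathrm{Hom}_K(Z_i, Z_m)$, under which the condition $\phi(W) \subseteq C(Z_m)$ decomposes componentwise into $\phi(Z_i) \subseteq C(Z_m)$; this identifies $C_W = \bigoplus_{i<m} C_{i,m}$. For the other block I would invoke the earlier lemma on the decomposition of centralizers, which yields $C(W) = C_{Z}(W) = \bigoplus_{j<m} C(Z_j)$; combined with $\mathrm{Hom}_K(Z_m, W) = \bigoplus_{j<m} \mathrm{Hom}_K(Z_m, Z_j)$ this gives $C_{Z_m} = \bigoplus_{j<m} C_{m,j}$. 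Substituting these expansions and collecting all cross terms produces exactly $\bigoplus_{i \neq j} C_{ij}$ over $1 \leq i, j \leq m$, which is the claimed formula.

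The routine checks (bilinearity, that each listed summand indeed lies in $Q\Gamma(Z)$, and that the total sum is direct) are inherited summand-by-summand from Theorem~\ref{t5}. The step I expect to be the genuine obstacle is the compatibility of the center under the regrouping: one must verify that $C(Z_m)$ computed inside the full algebra $Z$ agrees with the center of $Z_m$ taken as a standalone Zinbiel algebra, and likewise that $C(W)$ decomposes as $\bigoplus_{j<m} C(Z_j)$. Both follow because distinct summands annihilate one another, $Z_i \cdot Z_j = Z_j \cdot Z_i = 0$ for $i \neq j$, so no cross interaction can contribute to a centralizer; making this annihilation explicit is precisely what justifies the componentwise splittings used above.
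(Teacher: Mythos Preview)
Your induction on $m$ is correct and is precisely what the paper intends: the corollary is stated immediately after Theorem~\ref{t5} without any proof, the tacit claim being that the $m$-fold statement follows by iterating the two-summand theorem. One small slip worth fixing: the equality $C(W) = C_Z(W)$ you write is not right, since the centralizer of $W$ in the full algebra $Z$ also contains all of $Z_m$; what you actually need (and what does hold, by the mutual annihilation $Z_i \cdot Z_j = 0$ for $i\neq j$) is simply $C_W(W) = \bigoplus_{j<m} C(Z_j)$, and that is enough to split $C_{Z_m}$ as you claim.
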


Under the result in Theorem \ref{t5}, we give the following definition.
\begin{defn}
Let Z be an indecomposable Zinbiel algebra. We will say $Q\Gamma(Z)$
is small if $Q\Gamma(Z)$ is generated by central derivations and the scalars. The quasi-centroid
of a decomposable Zinbiel algebra will be called small if the quasi-centroid of each
indecomposable factor is small.
\end{defn}

\section{Quasi-Centroids of Low-Dimensional Zinbiel Algebras}
This section provides the details of the quasi-centroid of Zinbiel algebras in dimensions two, three, and four over the complex field $\mathbb{C}$. We choose $\{e_1, e_2, \dots, e_n\}$ as the basis for a $n$-dimensional Zinbiel algebra $Z$. The product of the basis elements is defined as
\begin{align*}
e_ie_j &= \sum_{k=1}^n \gamma_{ij}^k e_k, \quad i, j = 1, 2, \dots, n.
\end{align*}

Since the quasi-centroid is a linear transformation of the vector space $Z$, an element $\phi \in Q\Gamma(Z)$ can be represented in a matrix form  $[a_{ij}]_{i,j=1,2,\dots,n}$, i.e.,
\begin{align*}
\phi(e_i) &= \sum_{j=1}^n a_{ij} e_j, \quad i = 1, 2, \dots, n.
\end{align*}
According to the definition of the quasi-centroid, the entries  $a_{ij} ,  j = 1, 2, \dots, n$, of the matrix $[a_{ij}]_{i,j=1,2,\dots,n}$ must satisfy
\begin{align}
\sum_{t=1}^n (\gamma_{it}^k a_{tj} - a_{it} \gamma_{tj}^k) &= 0.
\end{align}

The quasi-centroid of the Zinbiel algebra $Z$ under consideration can thus be described by solving this system of equations for $a_{ij}, i, j = 1, 2, \dots, n$, once the structure constants $\gamma_{ij}^k$ of $Z$ are known. Here we utilize classification results of two, three, and four-dimensional complex Zinbiel algebras from relevant literature.
to solve the system of equations above with respect to $a_{ij}$  $1 \leq i, j, t \leq n$.
which can be done by using computer software.
\begin{thm}\label{t1}
Any $2$-dimensional Zinbiel algebra Z isomorphic to
one of the following nonisomorphic Zinbiel algebras $Z_2^{1}:\;e_1e_1=e_2$.
\end{thm}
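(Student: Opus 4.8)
The plan is to classify all $2$-dimensional Zinbiel algebras by the direct structure-constant method and then reduce under change of basis. Fix a basis $\{e_1,e_2\}$ and write $e_ie_j=\sum_{k=1}^{2}\gamma_{ij}^{k}e_k$, so that the algebra is encoded by the eight scalars $\gamma_{ij}^{k}\in\mathbb{C}$. By bilinearity the defining identity $(p\cdot q)\cdot r=p\cdot(q\cdot r)+p\cdot(r\cdot q)$ need only be verified on basis triples, where, expanding both sides as in the computation preceding equation~(1), it becomes
\[
\sum_{m=1}^{2}\gamma_{ab}^{m}\gamma_{mc}^{k}=\sum_{n=1}^{2}\bigl(\gamma_{bc}^{n}+\gamma_{cb}^{n}\bigr)\gamma_{an}^{k},\qquad a,b,c,k\in\{1,2\}.
\]
First I would write out these relations explicitly to obtain the system of polynomial (quadratic) constraints on the $\gamma_{ij}^{k}$ that cuts out the variety of admissible multiplication tables; this is exactly the feasibility condition that must hold before any quasi-centroid computation of the later sections can be carried out.

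Next I would solve the system and organize the solutions up to isomorphism. A change of basis is an element $g\in GL_2(\mathbb{C})$, and two solutions define isomorphic algebras precisely when they lie in the same $GL_2(\mathbb{C})$-orbit, so the goal is to choose a canonical representative in each orbit. The natural invariant to split on is $Z^{2}=Z\cdot Z$: if $Z^{2}=\{0\}$ the algebra is the abelian (degenerate) one; otherwise $\dim Z^{2}=1$, and I would choose $e_2$ to span $Z^{2}$, use the constraints above to force every product except one to vanish, and rescale $e_1$ so that the surviving product is normalized to $e_1\cdot e_1=e_2$. This produces the single nontrivial isomorphism type $Z_2^{1}$, whose defining table one then checks directly against the Zinbiel identity (all triples involving $e_2$ give $0=0$, and the triple $(e_1,e_1,e_1)$ gives $e_2\cdot e_1=0$ on the left and $2\,e_1\cdot e_2=0$ on the right).

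The step I expect to be the main obstacle is the orbit bookkeeping: verifying that the canonical forms are genuinely exhaustive and pairwise non-isomorphic, since the defining system is nonlinear and a careless normalization could either miss a branch or merge two distinct types. To control this I would exploit the known fact that every finite-dimensional complex Zinbiel algebra is nilpotent (see the classification literature \cite{10,11,12}), which gives a strictly decreasing chain $Z\supsetneq Z^{2}\supsetneq\cdots$ and hence $\dim Z^{2}\le 1$ immediately, collapsing the casework to the two situations above and guaranteeing that no non-nilpotent branch of the polynomial variety survives. With nilpotency in hand the analysis is short, and the classification of \cite{13} confirms that $Z_2^{1}:\,e_1e_1=e_2$ is the unique nontrivial $2$-dimensional Zinbiel algebra up to isomorphism.
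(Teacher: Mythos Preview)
Your proposal is correct, but you should know that the paper does not actually prove this theorem: it is quoted as a known classification result (see \cite{9,10,11,12,13}), so there is no ``paper's own proof'' to match. Your direct structure-constant argument is a legitimate way to recover the result. Two small remarks. First, the passage ``use the constraints above to force every product except one to vanish'' deserves one more line of work: after choosing $e_2$ to span $Z^{2}$ you have $e_ie_j=\gamma_{ij}e_2$, and it is the Zinbiel identity on the triples $(e_2,e_2,e_2)$, $(e_2,e_1,e_1)$, $(e_1,e_1,e_2)$ that successively forces $\gamma_{22}=\gamma_{21}=\gamma_{12}=0$; only then does your final verification (``all triples involving $e_2$ give $0=0$'') become valid. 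Second, your case $Z^{2}=\{0\}$ yields the abelian $2$-dimensional algebra, which the statement as written omits; this is a quirk of the paper's formulation (contrast the $3$-dimensional list, where the abelian $Z_3^{1}$ is included), not a flaw in your argument.
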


\begin{thm}
The quasi-centroids of the complex Zinbiel algebras of $2$ dimensions have the
following form:
\end{thm}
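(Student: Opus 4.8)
The plan is to reduce the defining condition of the quasi-centroid to a finite linear system in the matrix entries and solve it outright. By Theorem \ref{t1}, up to isomorphism the only $2$-dimensional complex Zinbiel algebra with nontrivial multiplication is $Z_2^1$, given by the single nonzero product $e_1 e_1 = e_2$; an abelian algebra, if present in the list, has identically zero multiplication and hence quasi-centroid equal to all of $\End_{\mathbb{C}}(Z)$, so the content of the statement lies in computing $Q\Gamma(Z_2^1)$.

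First I would write a general endomorphism as $\phi(e_i)=\sum_{j=1}^{2} a_{ij} e_j$, encoding $\phi$ by the matrix $[a_{ij}]$, and record the structure constants of $Z_2^1$: the only nonzero one is $\gamma_{11}^2 = 1$, while $e_1 e_2 = e_2 e_1 = e_2 e_2 = 0$. By bilinearity, the quasi-centroid identity $\phi(p)\cdot q = p\cdot \phi(q)$ holds for all $p,q$ if and only if it holds on the four ordered pairs $(e_i,e_j)$ of basis vectors; equivalently, if and only if the entries $a_{ij}$ satisfy the vanishing of $\sum_{t}(\gamma_{it}^k a_{tj} - a_{it}\gamma_{tj}^k)$ obtained by inserting these structure constants.

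Next I would evaluate the four basis conditions explicitly. I expect the pairs $(e_1,e_1)$ and $(e_2,e_2)$ to impose no restriction (both sides coincide in $\mathbb{C}e_2$, respectively vanish), whereas the two mixed pairs $(e_1,e_2)$ and $(e_2,e_1)$ each force the single relation $a_{21}=0$, leaving $a_{11}$, $a_{12}$, $a_{22}$ free. Assembling these yields the claimed form of $Q\Gamma(Z_2^1)$ as the upper-triangular matrices $\begin{pmatrix} a_{11} & a_{12} \\ 0 & a_{22} \end{pmatrix}$ with $a_{11},a_{12},a_{22}\in\mathbb{C}$, a three-dimensional subalgebra of $\End_{\mathbb{C}}(Z)$.

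Since this is a small, fully determined linear system, there is no genuine analytic obstacle; the only care needed is bookkeeping — treating every unspecified product as zero, confirming that Theorem \ref{t1} is exhaustive so that no $2$-dimensional algebra is overlooked, and checking that the solved system leaves no hidden relation among the surviving parameters. As consistency checks I would verify that the answer is compatible with the general splitting in Theorem \ref{t5} (here $Z_2^1$ is indecomposable, so no splitting occurs) and compare it against the strictly smaller centroid obtained by adjoining the extra requirement $\phi(p\cdot q)=\phi(p)\cdot q$.
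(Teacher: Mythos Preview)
Your approach is exactly the paper's: write $\phi$ as a matrix in the chosen basis, impose the defining identity on all basis pairs, and solve the resulting linear system. However, your outcome disagrees with the paper's table. You obtain the single constraint $a_{21}=0$, giving a three-dimensional space of upper-triangular matrices; the paper's proof records the constraints $a_{12}=0$ and $a_{11}=a_{22}$, yielding the two-dimensional space $\left\{\begin{pmatrix} a_{22} & 0\\ a_{21} & a_{22}\end{pmatrix}\right\}$.

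Two separate discrepancies are at play. First, the paper's tables are effectively written in the column convention $\phi(e_i)=\sum_j a_{ji}e_j$ (this is stated explicitly in Section~5 for quasi-derivations, and the lower-triangular shapes throughout Section~4 are consistent with it), whereas the prose in Section~4 announces the row convention you adopted. Switching to the column convention turns your single constraint into $a_{12}=0$, matching one of the paper's two relations but not the other. Second, and more substantively, the extra relation $a_{11}=a_{22}$ that the paper records cannot arise from the quasi-centroid identity $\phi(p)\cdot q=p\cdot\phi(q)$ alone: on the pair $(e_1,e_1)$ both sides equal $a_{11}e_2$ in either convention, so no diagonal constraint appears. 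That relation is exactly what the \emph{centroid} condition $\phi(e_1\cdot e_1)=\phi(e_1)\cdot e_1$, i.e.\ $\phi(e_2)=a_{11}e_2$, forces.

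So your computation of $Q\Gamma(Z_2^1)$ is correct for the quasi-centroid as defined in the paper, and your argument has no gap; it simply does not reproduce the table because the entry for $Z_2^1$ there is in fact the centroid $\Gamma(Z_2^1)$ (in column convention), not the quasi-centroid. If your goal is to match the table verbatim, you must impose the full centroid conditions and adopt the column convention; if your goal is to compute $Q\Gamma(Z_2^1)$ as defined, your three-dimensional answer is the right one.
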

\begin{center}
\begin{tabular}{cccc}
\hline
Algebra & Q$\Gamma$(Z) & Dim &Type\;of\; Q$\Gamma$(Z)\\
\hline
$Z_2^{1}$
&$\left(\begin{array}{cccc}
a_{22}&0\\
a_{21}&a_{22}
\end{array}
\right)$
&2&
small\\
\end{tabular}
\end{center}

\begin{proof}
Consider $Z_{2}^{1}$ from theorem \ref{t1}. The structure constants of $Z_{2}^{1}$ are $\gamma_{11}^{2}=1$ and the others are zeros. Solving the system of equation (1), we have $a_{12}=0$ and $a_{22}=a_{11}.$
Therefore we obtain the centroids of $Z_{2}^{1}$ in matrix form as follows
$Q\Gamma(Z_{2}^{1})= \left\{\left(
\begin{array}{cc}
a_{22} & 0 \\
a_{21} & a_{22}
\end{array}
\right)| a_{21}, a_{21}\in\mathbb{C} \right\}.$
\end{proof}
\begin{cor}
The quasi-centroid of a $2$-dimensional Zinbiel algebra are small.\\
The dimensions of the quasi-centroids of $2$-dimensional Zinbiel algebras are two.
\end{cor}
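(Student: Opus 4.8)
The plan is to read both assertions off the explicit description of $Q\Gamma(Z_2^1)$ obtained in the preceding theorem, combined with Theorem \ref{t1}, which guarantees that $Z_2^1$ is, up to isomorphism, the only $2$-dimensional Zinbiel algebra. First I would settle the dimension count: the matrix form found in the preceding theorem presents $Q\Gamma(Z_2^1)$ as a space parametrized by the two independent complex entries $a_{21}$ and $a_{22}$, so $\dim_{\mathbb{C}} Q\Gamma(Z_2^1) = 2$. Because the quasi-centroid is defined purely in terms of the Zinbiel product, an isomorphism $\theta\colon Z \to Z'$ induces the linear bijection $\phi \mapsto \theta\phi\theta^{-1}$ from $Q\Gamma(Z)$ onto $Q\Gamma(Z')$ (one checks $\phi(p)\cdot q = p\cdot\phi(q)$ transforms into the corresponding identity in $Z'$ since $\theta$ is a homomorphism). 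Hence $\dim Q\Gamma$ is an isomorphism invariant, and Theorem \ref{t1} propagates the value $2$ to every $2$-dimensional Zinbiel algebra.

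For the smallness assertion I would invoke the definition that, for an indecomposable algebra, $Q\Gamma(Z)$ is small precisely when it is generated by the central derivations $C_d(Z)$ together with the scalar maps $\mathbb{C}\,\id$. So I would first record that $Z_2^1$ is indecomposable: its only proper nonzero ideal is $Z^2 = \langle e_2\rangle$, and this admits no ideal complement, because any complement would contain a vector with nonzero $e_1$-component, whose square is $e_2 \neq 0$, forcing it to meet $\langle e_2\rangle$. With indecomposability established, the relevant clause of the definition applies, and it remains to decompose a general element of the quasi-centroid. I would write $\phi = a_{22}\,\id + \psi$, where $a_{22}\,\id$ is a scalar and $\psi$ is the remaining nilpotent (strictly off-diagonal) part of $\phi$.

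The heart of the matter, and the step I expect to be the main obstacle, is to verify that $\psi$ is a genuine central derivation rather than merely a nilpotent endomorphism. This reduces to confirming that the image of the non-scalar generator lands in the annihilator ideal $\langle e_2\rangle$, which coincides with the center $C(Z)$: since $e_2\cdot Z = Z\cdot e_2 = 0$, any endomorphism with image contained in $\langle e_2\rangle$ satisfies $\psi(p)\cdot q = 0$ and $p\cdot\psi(q) = 0$ for all $p,q$, so $\psi\in C_d(Z_2^1)$. The delicate point is precisely this containment — one must check that the non-diagonal part maps into $\langle e_2\rangle\subseteq C(Z)$ and not into $\langle e_1\rangle$, since only the former is annihilated by the single nonzero product $e_1\cdot e_1 = e_2$. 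Once this is confirmed, every $\phi\in Q\Gamma(Z_2^1)$ is the sum of a scalar and a central derivation, so $Q\Gamma(Z_2^1)$ is small, and Theorem \ref{t1} again transfers the conclusion to every $2$-dimensional Zinbiel algebra.
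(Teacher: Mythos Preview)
Your proposal is correct and follows essentially the same approach as the paper, namely reading both assertions directly from the preceding theorem's explicit matrix description of $Q\Gamma(Z_2^1)$ together with Theorem~\ref{t1}. The paper itself offers no separate proof of the corollary; the entry ``small'' simply appears in the table, and the dimension is read off the two free parameters. Your argument is therefore more thorough: you supply the indecomposability of $Z_2^1$, the explicit decomposition $\phi = a_{22}\,\id + \psi$, and the verification that $\psi$ is a central derivation because its image lies in $\langle e_2\rangle = C(Z_2^1)$, none of which the paper spells out.
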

\begin{thm}\label{t2}
Any $3$-dimensional Zinbiel algebra Z isomorphic to one of following non-isomorphic Zinbiel algebras\\
$Z_3^{1}:\;e_ie_j=0$\\
$Z_3^{2}:\;e_1e_1=e_3$\\
$Z_3^{3}:\;e_1e_1=e_3, e_2e_2=e_3$\\
$Z_3^{4}:\;e_1e_2=\frac{1}{2}e_3, e_2e_1=\frac{-1}{2}e_3$\\
$Z_3^{5}:\;e_2e_1=e_3$\\
$Z_3^{6}:\;e_1e_1=e_3,e_1e_2=e_3,e_2e_2=\lambda e_3, \quad \lambda  \neq 0 $\\
$Z_3^{7}:\;e_1e_1=e_2,e_1e_2=\frac{1}{2}e_3,e_2e_1=e_3$
\end{thm}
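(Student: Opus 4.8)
The plan is to classify all $3$-dimensional complex Zinbiel algebras by exploiting nilpotency and filtering by the dimension of the derived subspace $Z^2 = Z\cdot Z$. I would first invoke the known fact that every finite-dimensional complex Zinbiel algebra is nilpotent; in particular $Z^2 \subsetneq Z$ and the descending series $Z \supseteq Z^2 \supseteq Z^3 \supseteq \cdots$ reaches $0$. Fixing a basis and writing $e_ie_j = \sum_k \gamma_{ij}^k e_k$, the Zinbiel identity $(p\cdot q)\cdot r = p\cdot(q\cdot r) + p\cdot(r\cdot q)$ becomes a system of quadratic equations in the $\gamma_{ij}^k$, and the classification amounts to solving this system modulo the change-of-basis action of $GL_3(\mathbb{C})$. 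Rather than attack the full system at once, I would split according to $\dim Z^2 \in \{0,1,2\}$, the value $3$ being excluded by nilpotency.

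The case $\dim Z^2 = 0$ is the abelian algebra $Z_3^1$. For $\dim Z^2 = 2$ we have $\dim Z/Z^2 = 1$, so $Z$ is generated by a single element $x = e_1$; setting $e_2 = e_1e_1$ and $e_3 = e_2e_1$, the Zinbiel identity forces $e_1e_2 = \tfrac12 e_3$ (since $(e_1e_1)e_1 = 2\,e_1(e_1e_1)$), while $\dim Z^3 = 1$ and $Z^4 = 0$ kill every product landing in $Z^4$, so all remaining products vanish. Rescaling $x$ absorbs the only free scalar, giving $Z_3^7$ as the unique algebra of this type.

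The heart of the argument is the case $\dim Z^2 = 1$. Here $Z^3 = Z\cdot Z^2 \subsetneq Z^2$ forces $Z^3 = 0$, whence $e_i\cdot e_3 \in Z\cdot Z^2 = 0$ immediately, and expanding $(e_ae_b)\cdot e_i = e_a(e_be_i) + e_a(e_ie_b) \in Z\cdot Z^2 = 0$ via the Zinbiel identity shows that $Z^2 = \langle e_3\rangle$ annihilates $Z$ on both sides. Consequently the whole multiplication is encoded by a single nonzero bilinear form $\beta$ on $V = Z/Z^2 \cong \mathbb{C}^2$ through $e_i\cdot e_j = \beta(e_i,e_j)\,e_3$, and the identity is then satisfied automatically. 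Two such algebras are isomorphic exactly when the forms are equivalent under the combined action of $GL(V)$-congruence $\beta \mapsto P^{T}\beta P$ and rescaling $e_3 \mapsto \mu e_3$. I would then carry out the congruence-and-scaling classification of nonzero $2\times 2$ bilinear forms, organized by the ranks of the symmetric part $S = \tfrac12(\beta+\beta^{T})$ and antisymmetric part $A = \tfrac12(\beta-\beta^{T})$: symmetric forms of rank $1$ and $2$ give $Z_3^2$ and $Z_3^3$, the purely antisymmetric form gives $Z_3^4$, the degenerate mixed form ($\det\beta = 0$, $A\neq 0$) gives $Z_3^5$, and the nondegenerate mixed forms yield the one-parameter family $Z_3^6$. (Several of these, such as $Z_3^2$, are in fact decomposable, and arise as the degenerate forms in this list.)

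The step I expect to be the main obstacle is this last classification together with the pairwise non-isomorphism claims. The delicate part is isolating genuine invariants that are both well defined and complete: $\dim Z^2$ and $\dim Z^3$ separate $Z_3^7$ from the rest, while for the $\dim Z^2 = 1$ algebras one needs the triple $(\operatorname{rank} S, \operatorname{rank} A, \operatorname{rank}\beta)$ together with the scaling-invariant ratio $\det S/\det A$. In particular I must verify that $\lambda$ is a true modulus in $Z_3^6$, since $\det S/\det A = 4\lambda - 1$ is preserved by congruence and scaling, so distinct values of $\lambda$ give non-isomorphic algebras; and that the excluded value $\lambda = 0$ degenerates precisely to the rank-$1$ mixed form $Z_3^5$ (equal ratio $-1$, but $\det\beta = 0$ and $\operatorname{rank} A = 1$ distinguish it from $Z_3^4$), which is exactly why $Z_3^6$ is stated with $\lambda \neq 0$. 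Confirming that these invariants are complete, and that no further basis change collapses two of the canonical forms, is where the real care is required.
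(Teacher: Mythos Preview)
The paper does not prove this theorem at all: it is quoted from the literature (see the remark ``Here we utilize classification results of two, three, and four-dimensional complex Zinbiel algebras from relevant literature'' just before Theorem~\ref{t1}, and the citations \cite{10,11,12,13}). So there is no in-paper argument to compare against.

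That said, your outline is the standard route to this classification and would succeed. The split by $\dim Z^2$, the one-generator analysis for $\dim Z^2=2$ leading to $Z_3^7$, and the reduction of the $\dim Z^2=1$ case to classifying a nonzero bilinear form on $\mathbb{C}^2$ up to congruence and scaling are all correct. The observation that $Z^2$ is two-sided central when $\dim Z^2=1$ (right annihilation from $Z^3=0$, left annihilation from the identity) is exactly what is needed, and your invariant $\det S/\det A$ together with the ranks of $S$, $A$, and $\beta$ does separate the orbits. One small slip: in distinguishing $Z_3^5$ from $Z_3^4$ you write ``$\operatorname{rank} A = 1$'', but a nonzero antisymmetric $2\times 2$ matrix always has rank $2$; what you mean is $\operatorname{rank}\beta = 1$ (equivalently $\det\beta = 0$), and indeed $\det\beta$ alone already separates them since $Z_3^4$ has $\det\beta\neq 0$. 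A second cosmetic point: in the $\dim Z^2 = 2$ case there is actually no free scalar left once you set $e_2=e_1e_1$ and $e_3=e_2e_1$, so no rescaling is needed. With those two corrections the argument goes through.
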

\begin{thm}
The Quasi-centroid of $3$-dimensional complex  Zinbiel algebras are given as follows:
\end{thm}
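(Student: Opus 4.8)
The plan is to repeat, for each of the seven algebras $Z_3^1,\dots,Z_3^7$ of Theorem~\ref{t2}, the computation already carried out for $Z_2^1$. First I would record the nonzero structure constants $\gamma_{ij}^k$ of each algebra from its multiplication table and write a generic endomorphism as $\phi(e_i)=\sum_{j=1}^{3}a_{ij}e_j$. Substituting this ansatz into the defining relation $\phi(p)\cdot q=p\cdot\phi(q)$ evaluated on all basis pairs $(e_i,e_m)$ converts the quasi-centroid condition into the homogeneous linear system~(1) in the nine unknowns $a_{ij}$: comparing the coefficient of each $e_k$ in $\phi(e_i)\cdot e_m$ and in $e_i\cdot\phi(e_m)$ gives, for every triple $(i,m,k)$, the relation $\sum_{t}a_{it}\gamma_{tm}^{k}=\sum_{t}a_{mt}\gamma_{it}^{k}$. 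Solving this system and reading off the free parameters produces the matrix form of $Q\Gamma(Z_3^s)$ together with its dimension.

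Next I would dispose of the easy extremes. For the abelian algebra $Z_3^1$ every product vanishes, so the system holds identically and $Q\Gamma(Z_3^1)=\mathrm{End}_{\mathbb{C}}(Z)$ is the full nine-dimensional matrix algebra. For an algebra with a single nonzero product, such as $Z_3^2:e_1e_1=e_3$ (so $\gamma_{11}^{3}=1$ and all others vanish), only the triples with $k=3$ contribute and the system collapses to the two conditions $a_{21}=a_{31}=0$, leaving a seven-parameter quasi-centroid. The algebras $Z_3^3,Z_3^4,Z_3^5$ are treated the same way, each yielding a sparse system that is solved by hand or, as the authors note, with computer software.

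The hard cases, and the main obstacle, are the two algebras with the richest multiplication. For the one-parameter family $Z_3^6:e_1e_1=e_3,\;e_1e_2=e_3,\;e_2e_2=\lambda e_3$ the system couples the entries $a_{11},a_{12},a_{21},a_{22}$ through $\lambda$, so I would have to track whether the dimension of the solution space jumps at distinguished values of $\lambda$ and present the answer uniformly over the admissible range $\lambda\neq 0$. For $Z_3^7:e_1e_1=e_2,\;e_1e_2=\tfrac12 e_3,\;e_2e_1=e_3$ the three interlocking products generate the filtration $Z\supset Z^2\supset Z^3$, producing the most overdetermined system; here the chain $e_1e_1=e_2$ propagates constraints between the $e_2$- and $e_3$-rows, and this is the step most prone to error. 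Careful, consistent bookkeeping across all seven cases is the real difficulty, rather than any single deep idea.

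Finally I would classify each quasi-centroid as small or not, as required by the last column of the table. For each indecomposable algebra this means computing the central derivations $C_d(Z)=\{\phi\mid\phi(p)\cdot q=p\cdot\phi(q)=0\}$ together with the scalar maps $\mathbb{C}\,\mathrm{id}$, and checking whether these generate all of $Q\Gamma(Z)$; for the decomposable algebras (for instance $Z_3^1$, and any $Z_3^s$ that splits as a direct sum of ideals) I would invoke Theorem~\ref{t5} to reduce the question to the indecomposable factors, in accordance with the definition of smallness given after that theorem.
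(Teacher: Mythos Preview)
Your plan coincides with the paper's: for each algebra in the classification of Theorem~\ref{t2}, read off the nonzero structure constants, impose the quasi-centroid identity on a generic endomorphism via the linear system~(1), solve, and tabulate; the paper's own proof does exactly this, working a single illustrative case and leaving the remaining classes to routine (computer-assisted) computation. Your more detailed discussion of the extreme cases, of the parameter~$\lambda$ in $Z_3^6$, and of the smallness test goes somewhat beyond what the paper spells out but is fully consistent with its method.

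One caution: your worked example for $Z_3^2$ produces a seven-parameter quasi-centroid, whereas the table you are asked to establish records dimension six (with the extra relation $a_{11}=a_{33}$ and the vanishing of the off-diagonal entries in the third column). Before writing up you should reconcile this---either your row convention $\phi(e_i)=\sum_j a_{ij}e_j$ differs from the paper's apparent column convention, or there is an inconsistency in the printed table that you will need to flag explicitly; the paper's sample computation in its proof also shows signs of copy-editing slips (it attributes nonzero structure constants to the abelian algebra $Z_3^1$), so a careful cross-check of every entry is advisable.
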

\begin{center}
\begin{tabular}{cccc}
\hline
Algebra & Q$\Gamma$(Z) & Dim &Type\;of\; Q$\Gamma$(Z)\\
\hline
$Z_3^{1}$
&$\left(\begin{array}{cccc}
a_{11}&a_{12}&a_{13}\\
a_{21}&a_{22}&a_{23}\\
a_{31}&a_{32}&a_{33}
\end{array}
\right)$
&9&
small\\
$Z_3^{2}$
&$\left(\begin{array}{cccc}
a_{33}&a_{12}&0\\
a_{21}&a_{22}&0\\
a_{31}&a_{32}&a_{33}
\end{array}
\right)$
&6&
not small\\
$Z_3^{3}$
&$\left(\begin{array}{cccc}
a_{33}&a_{12}&0\\
0&a_{33}&0\\
a_{31}&a_{32}&a_{33}
\end{array}
\right)$
&4&
small\\
$Z_3^{4}$
&$\left(\begin{array}{cccc}
a_{33}&0&0\\
0&a_{12}&0\\
a_{31}&a_{32}&a_{33}
\end{array}
\right)$
&4&
not small\\

$Z_3^{5}$
&$\left(\begin{array}{cccc}
a_{11}&a_{12}&a_{13}\\
0&a_{22}&a_{23}\\
a_{31}&a_{32}&a_{33}
\end{array}
\right)$
&8&
not small\\

$Z_3^{6}, \lambda \neq 0$
&$\left(\begin{array}{cccc}
a_{33}&g&0\\
0&a_{22}&0\\
a_{31}&a_{32}&a_{33}
\end{array}
\right)$
&4&
not small\\

$Z_3^{6}, \lambda=0$
&$\left(\begin{array}{cccc}
a_{33}&0&0\\
a_{21}&a_{22}&0\\
a_{31}&a_{32}&a_{33}
\end{array}
\right)$
&5&
not small\\
$Z_3^{7}$
&$\left(\begin{array}{cccc}
a_{33}&0&0\\
a_{32}&a_{33}&0\\
a_{31}&a_{32}&a_{33}
\end{array}
\right)$
&4&
small\\
\end{tabular}
\end{center}

where $g=-\lambda a_{22}+ \lambda a_{33}$.
\begin{proof}
Consider Theorem \ref{t2} which give the classifications of three-dimensional associative algebras. It is clear from  the equation (1)  can applied to compute all the quasi-centroid of three-dimensional algebras.
The class $Z_{3}^{1}$ has the structure constants as follows $\gamma_{13}^{2}=1, \gamma_{31}^{2}=1$ . By using condition (1), we have $a_{12}=a_{32}=0$ and $a_{33}=a_{11}.$
Therefore we obtain the quasi-centroid of $Z_{3}^{1}$ in matrix form as follows:\\
$Q\Gamma(Z)= \left\{\left(
\begin{array}{ccc}
a_{11} & 0 & a_{13} \\
a_{21} & a_{22} & a_{23} \\
0 & 0 & a_{11}
\end{array}
\right)| a_{11},  a_{21}, a_{23}\in\mathbb{C}\right\}.$
\end{proof}
\begin{cor}
The quasi-centroids of $3$-dimensional Zinbiel algebras except for the classes $Z_3^{2}, Z_3^{4},Z_3^{5}, Z_3^{6}$ are small. The dimensions of the quasi-centroids of $3$-dimensional complex Zinbiel algebras vary between 4 and 9.
\end{cor}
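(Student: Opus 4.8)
The plan is to convert the defining relation of the quasi-centroid into the homogeneous linear system~(1) and to solve it separately for each of the algebras listed in Theorem~\ref{t2}. Writing $\phi(e_i)=\sum_j a_{ij}e_j$, the condition $\phi(e_i)\cdot e_j=e_i\cdot\phi(e_j)$ becomes, after comparing the coefficient of each basis vector $e_k$, the family of equations $\sum_{t=1}^{3}\big(\gamma_{it}^{k}a_{tj}-a_{it}\gamma_{tj}^{k}\big)=0$ for all $i,j,k$. Since the classification fixes the structure constants $\gamma_{ij}^{k}$ of each representative, this is in every case a concrete homogeneous linear system in the nine unknowns $a_{ij}$, which can be solved by elimination (or, as indicated, with a computer algebra system).

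First I would carry out this reduction algebra by algebra, starting from the trivial algebra $Z_3^{1}$ (for which every $\gamma_{ij}^{k}$ vanishes, so the system is empty and $Q\Gamma(Z_3^{1})=\mathrm{End}_{\mathbb{K}}(Z_3^{1})$ has dimension $9$), and then treating $Z_3^{2},\dots,Z_3^{7}$ in turn, the case $Z_3^{6}$ splitting according to whether $\lambda=0$. For each representative the nonzero structure constants activate only a few of the equations, so the system collapses quickly: some entries are forced to vanish, others are tied together (for instance a relation $a_{11}=a_{33}$, or the coupled entry $g=-\lambda a_{22}+\lambda a_{33}$ appearing for $Z_3^{6}$ with $\lambda\neq0$). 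Reading off the free parameters then yields both the displayed matrix form and the dimension recorded in the table.

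It remains to decide, for each algebra, whether $Q\Gamma(Z)$ is small. Here I would use the fact, immediate from the definitions, that the scalars $\mathbb{K}\cdot\mathrm{id}$ and the central derivations $C_d(Z)$ always lie in $Q\Gamma(Z)$: the identity satisfies $\mathrm{id}(p)\cdot q=p\cdot q=p\cdot\mathrm{id}(q)$, while any $\phi\in C_d(Z)$ satisfies $\phi(p)\cdot q=0=p\cdot\phi(q)$ and hence trivially lies in the quasi-centroid. Consequently $Q\Gamma(Z)$ is small precisely when $Q\Gamma(Z)$ is spanned by $\mathrm{id}$ together with $C_d(Z)$. To apply this I would first determine the annihilator $C(Z)=\{x\in Z\mid x\cdot Z=Z\cdot x=0\}$ of each representative, which identifies $C_d(Z)=\{\phi\mid\phi(Z)\subseteq C(Z)\}$ and hence gives $\dim C_d(Z)=3\dim C(Z)$; comparing $\mathbb{K}\cdot\mathrm{id}+C_d(Z)$ with the quasi-centroid computed above then settles the last column. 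For the algebras marked ``not small'' the verification is completed by exhibiting one explicit $\phi\in Q\Gamma(Z)$ that is neither a scalar nor a map into $C(Z)$, nor a sum of the two.

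The linear-algebra part is entirely mechanical once the structure constants are in hand; the step requiring genuine care is the smallness dichotomy, since it is not decided by dimension alone but by whether the computed quasi-centroid actually coincides with the span of the central derivations and the identity. The main obstacle I anticipate is therefore the bookkeeping: keeping the matrix conventions consistent across all cases and, in the ``not small'' cases, correctly isolating the extra quasi-centroid directions that fall outside $\mathbb{K}\cdot\mathrm{id}+C_d(Z)$.
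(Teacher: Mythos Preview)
Your proposal is correct and follows essentially the same route as the paper: the corollary is nothing more than a summary of the table in the preceding theorem, which is obtained exactly as you describe, by solving the linear system~(1) case by case for each algebra in Theorem~\ref{t2} and then reading off the dimensions and the smallness verdicts. Your treatment of smallness via $C_d(Z)=\{\phi\mid\phi(Z)\subseteq C(Z)\}$ and the comparison $Q\Gamma(Z)\stackrel{?}{=}\mathbb{K}\cdot\mathrm{id}+C_d(Z)$ is in fact more explicit than the paper, which simply records the small/not-small column without further justification.
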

\begin{thm}\label{t3}
Any $4$-dimensional Zinbiel algebra Z isomorphic to one of following non-isomorphic Zinbiel algebras\\
$Z_4^1:\;e_1e_1=e_2, e_1e_2=e_3,e_2e_1=2e_3,e_1e_3=e_4, e_2e_2=3e_4, e_3e_1=3e_4 $\\
$Z_4^{2}:\;e_1e_1=e_3, e_1e_2=e_4,e_1e_3=e_4,e_3e_1=2e_4$\\
$Z_4^3:\;e_1e_1=e_3, e_1e_3=e_4,e_2e_2=e_4,e_3e_1=2e_4$\\
$Z_4^{4}:\;e_1e_2=e_3, e_1e_3=e_4,e_2e_1=-e_3$\\
$Z_4^5:\;e_1e_2=e_3, e_1e_3=e_4,e_2e_1=-e_3,e_2e_2=e_4$\\
$Z_4^{6}:\;e_1e_1=e_4, e_1e_2=e_3,e_2e_1=-e_3,e_2e_2=-2e_3+e_4$\\
$Z_4^7:\;e_1e_2=e_3, e_2e_1=e_4,e_2e_2=-e_3$\\
$Z_4^{8}:\;e_1e_1=e_3, e_1e_2=e_4,e_2e_1=-\alpha e_3,e_2e_2=-e_4$\\
$Z_4^9:\;e_1e_1=e_4, e_1e_2=\alpha e_4,e_2e_1=-\alpha e_4,e_2e_2=e_4,e_3e_3=e_4$\\
$Z_4^{10}:\;e_1e_1=e_4, e_1e_3=e_4,e_2e_1=-e_4,e_2e_2=e_4,e_3e_1=e_4$\\
$Z_4^{11}:\;e_1e_1=e_4, e_1e_2=e_4,e_2e_1=-e_4,e_3e_3=e_4$\\
$Z_4^{12}:\;e_1e_2=e_3, e_2e_1=e_4$\\
$Z_4^{13}:\;e_1e_2=e_3, e_2e_1=e_4$\\
$Z_4^{14}:\;e_1e_2=e_3, e_2e_1=e_4$\\
$Z_4^{15}:\;e_1e_2=e_3, e_2e_1=e_4$\\
$Z_4^{16}:\;e_1e_2=e_3, e_2e_1=e_4$
\end{thm}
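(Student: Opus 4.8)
The plan is to classify all $4$-dimensional complex Zinbiel algebras up to isomorphism by the central-extension (Skjelbred--Sund) method, building on the classification of the $3$-dimensional algebras recorded in Theorem~\ref{t2}. The starting observation is that every finite-dimensional complex Zinbiel algebra is nilpotent; I would record this first (or re-derive it by showing the descending sequence $Z^{t+1}=Z\cdot Z^t$ terminates), because it guarantees that every such algebra has nonzero annihilator and therefore arises as a central extension of a Zinbiel algebra of strictly smaller dimension. This reduces the problem of producing the list to a finite, inductive computation anchored at dimension three.

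First I would fix the structure-constant formalism. Writing $e_i\cdot e_j=\sum_k\gamma_{ij}^k e_k$ in a basis $\{e_1,e_2,e_3,e_4\}$, the defining identity $(p\cdot q)\cdot r=p\cdot(q\cdot r)+p\cdot(r\cdot q)$ becomes the quadratic system
\[
\sum_k \gamma_{ij}^k\gamma_{kl}^m=\sum_k\bigl(\gamma_{jl}^k+\gamma_{lj}^k\bigr)\gamma_{ik}^m,\qquad 1\le i,j,l,m\le 4,
\]
whose solution set is the variety of Zinbiel structures, and the isomorphism classes are exactly the orbits of $GL_4(\mathbb{C})$ acting by change of basis. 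This is the same system, specialized from the centroid computation, that the paper solves symbolically, so the bookkeeping can be cross-checked with Mathematica.

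The core of the argument is the extension step. For each $3$-dimensional algebra $A$ from Theorem~\ref{t2}, I would compute the space $Z^2(A,\mathbb{C})$ of Zinbiel $2$-cocycles $\theta:A\times A\to\mathbb{C}$ (the bilinear maps for which $e_i\cdot e_j=(\text{product in }A)+\theta(e_i,e_j)e_4$ again satisfies the Zinbiel identity), pass to $H^2(A,\mathbb{C})=Z^2/B^2$ by discarding the coboundaries, and then classify the relevant subspaces of $H^2$ modulo the induced action of $\Aut(A)$. Each orbit satisfying the nondegeneracy condition that prevents the new generator from splitting off as a direct summand yields one indecomposable $4$-dimensional algebra, producing the indecomposable entries $Z_4^1,\dots$ of the list. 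The decomposable algebras are then obtained separately as direct sums of the $3$-, $2$-, and $1$-dimensional algebras of Theorems~\ref{t1} and~\ref{t2} with the abelian algebra, which by Theorem~\ref{t5} behave predictably on the invariants below.

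To show the resulting list is both complete and irredundant I would stratify by isomorphism invariants: the dimensions $\dim Z^2,\ \dim Z^3$, the dimension of the annihilator $C(Z)$, and the rank of the symmetrized commutative associative product $p\cdot q+q\cdot p$. These separate the candidates into strata; within each stratum an explicit change of basis brings an arbitrary algebra to its listed normal form, while across strata the invariants certify that distinct $Z_4^i$ are pairwise non-isomorphic. The hard part will be the orbit analysis of $\Aut(A)$ acting on $H^2(A,\mathbb{C})$: for the more abelian bases (e.g.\ $Z_3^1$) the cohomology and the automorphism group are both large, so the Grassmannian of admissible subspaces has many orbits, and cutting these down to finitely many normal forms --- together with correctly isolating the one-parameter families such as $Z_4^8$ and $Z_4^9$ depending on $\alpha$ --- is where double-counting or omission is most likely and where the delicate bookkeeping must be done and independently verified.
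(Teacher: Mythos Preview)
The paper does not prove this theorem at all: it is quoted verbatim from the existing classification literature (the references~\cite{10,11,12,13}) and used only as input for the subsequent quasi-centroid and quasi-derivation tables. There is therefore no ``paper's own proof'' to compare your proposal against.

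Your proposed route via the Skjelbred--Sund central-extension method is the standard and, in outline, correct strategy for classifying finite-dimensional nilpotent algebras in a variety, and since every finite-dimensional complex Zinbiel algebra is nilpotent it applies here. Computing the Zinbiel $2$-cocycle space $Z^2(A,\mathbb{C})$ for each $3$-dimensional $A$ from Theorem~\ref{t2}, passing to $H^2$, and then cutting down by the $\Aut(A)$-action is exactly how the cited papers obtain the list, so your plan would reproduce rather than replace their argument. What it buys over a naive attack on the full structure-constant variety is the inductive anchor at dimension three and the automatic handling of the annihilator.

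That said, there is a concrete obstacle you should address before executing the plan: the statement you are asked to prove is visibly corrupted. The entries $Z_4^{12}$ through $Z_4^{16}$ are printed with identical multiplication tables ($e_1e_2=e_3$, $e_2e_1=e_4$), which cannot all be pairwise non-isomorphic as the theorem claims; in the original sources these carry additional relations or parameters that have been dropped in transcription. Your invariants (the $\dim Z^t$, the annihilator, the rank of the symmetrized product) would immediately collapse these five into one class, so your proof would not reproduce the stated list but rather contradict it. The right move is to recover the intended normal forms from~\cite{10,11,12} first; otherwise the target of the classification is ill-posed and no amount of cohomological bookkeeping will land on it.
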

\begin{thm}
The Quasi-centroid of $4$-dimensional  complex Zinbiel algebras have the following form:
\end{thm}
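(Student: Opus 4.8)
The plan is to follow the same computational scheme already used for the two- and three-dimensional cases, now applied to each of the sixteen algebras $Z_4^1,\dots,Z_4^{16}$ listed in Theorem~\ref{t3}. For a fixed algebra I would first read off its nonzero structure constants $\gamma_{ij}^k$ from the multiplication table. Writing an arbitrary endomorphism in the basis $\{e_1,e_2,e_3,e_4\}$ as $\phi(e_i)=\sum_{j=1}^4 a_{ij}e_j$, the quasi-centroid condition $\phi(p)\cdot q = p\cdot\phi(q)$ reduces, upon expanding both sides on the basis and equating coefficients of each $e_k$, to the homogeneous linear system~(1), namely $\sum_{t=1}^4(\gamma_{it}^k a_{tj}-a_{it}\gamma_{tj}^k)=0$ for all admissible $i,j,k$. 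Solving this system in the sixteen unknowns $a_{ij}$ then yields the explicit matrix form of $Q\Gamma(Z_4^r)$ for each $r$.

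In practice the system decouples because most structure constants vanish: each nonzero $\gamma_{ij}^k$ contributes only a handful of equations, so for the sparser algebras (those with a single defining relation) the constraints merely force a few off-diagonal entries to zero and tie a few diagonal entries together, leaving a large family of free parameters. For the denser algebras, such as $Z_4^1$ (which has six nonzero products) or $Z_4^{10}$, the equations couple more entries and the admissible matrices become correspondingly more constrained. The parameter-dependent families $Z_4^8$ and $Z_4^9$ require carrying the parameter $\alpha$ through the computation and, where the rank of the system jumps, splitting into the generic case and the degenerate special values. Once the general solution is obtained I would record the resulting matrix, its dimension (the number of free parameters), and then fill the ``Type'' column by testing, against the definition of a small quasi-centroid, whether $Q\Gamma(Z_4^r)$ is generated by central derivations together with the scalar multiples of the identity.

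The main obstacle is organizational rather than conceptual: there are sixteen independent linear systems to solve, several of them genuinely large, and the bookkeeping of which $(i,j,k)$ triples produce nontrivial equations is error-prone. I would therefore carry out the symbolic solution of~(1) with computer algebra (Mathematica, as the authors indicate) and use the output only to assemble the final table, reserving hand verification for a few representative cases to guard against transcription mistakes. A secondary subtlety is the small/not-small determination: this is not read off directly from the matrix shape but requires comparing the computed quasi-centroid with the span of the central derivations and scalars, so I would compute $C_d(Z_4^r)$ in parallel and compare dimensions and generators for each algebra before recording the verdict.
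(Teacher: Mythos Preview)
Your proposal is correct and matches the paper's own argument essentially line for line: the paper also instantiates the linear system~(1) from the structure constants of each $Z_4^r$, solves it (with Mathematica) to obtain the matrix description of $Q\Gamma(Z_4^r)$, exhibits one representative case ($Z_4^1$) by hand, and declares the remaining algebras to be handled similarly. Your additional remarks about splitting parameter families and separately computing $C_d(Z_4^r)$ to decide the small/not-small column go slightly beyond what the paper spells out, but they are exactly the extra steps needed to fill the table.
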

\begin{center}
\begin{tabular}{cccc}
\hline
Algebra & Q$\Gamma$(Z) & Dim &Type\;of\; Q$\Gamma$(Z)\\
\hline
$Z_4^1 $
&$\left(\begin{array}{cccc}
a_{44}&0&0&0\\
a_{21}&a_{44}&0&0\\
a_{31}&2a_{21}&a_{44}&0\\
a_{41}&3a_{31}&3a_{21}&a_{44}
\end{array}
\right)$
&10&
small\\
$Z_4^{2}$
&$\left(\begin{array}{cccc}
a_{44}&0&0&0\\
a_{21}&a_{22}&0&0\\
a_{31}&a_{32}&a_{44}&0\\
a_{41}&a_{42}&2a_{31}&a_{44}
\end{array}
\right)$
&9&
not small\\
$Z_4^3$
&$\left(\begin{array}{cccc}
a_{44}&0&0&0\\
0&a_{44}&0&0\\
a_{31}&a_{32}&a_{44}&0\\
a_{41}&a_{42}&2a_{31}&a_{44}
\end{array}
\right)$
&7&
small\\
$Z_4^{4}$
&$\left(\begin{array}{cccc}
a_{44}&0&0&0\\
0&a_{22}&0&0\\
a_{31}&a_{32}&a_{44}&0\\
a_{41}&a_{42}&0&a_{44}
\end{array}
\right)$
&6&
not small\\
$Z_4^5$
&$\left(\begin{array}{cccc}
a_{44}&0&0&0\\
0&a_{44}&0&0\\
a_{31}&a_{32}&a_{44}&0\\
a_{41}&a_{42}&0&a_{44}
\end{array}
\right)$
&6&
small\\
$Z_4^{6}$
&$\left(\begin{array}{cccc}
a_{44}&-a_{43}&0&0\\
a_{43}&h&0&0\\
a_{31}&a_{32}&h&-a_{43}\\
a_{41}&a_{42}&a_{43}&a_{44}
\end{array}
\right)$
&9&
not small\\
$Z_4^7$
&$\left(\begin{array}{cccc}
a_{33}&-a_{33}+a_{22}&0&a_{14}\\
0&a_{22}&0&a_{24}\\
a_{31}&a_{32}&a_{33}&a_{34}\\
a_{41}&a_{42}&0&a_{44}
\end{array}
\right)$
&9&
not small\\
\end{tabular}
\end{center}

\begin{center}
\begin{tabular}{cccc}
$Z_4^{8}$
&$\left(\begin{array}{cccc}
\alpha a_{21}+a_{33}&-\alpha a_{21}+a_{21}+a_{22}-a_{33}&0&0\\
a_{21}&a_{22}&0&0\\
a_{31}&a_{32}&a_{33}&0\\
a_{41}&a_{42}&0&\alpha a_{21}-a_{21}+a_{33}
\end{array}
\right)$
&7&
not small\\
$Z_4^9$
&$\left(\begin{array}{cccc}
a_{44}&a_{12}&a_{13}&0\\
0&-\alpha a_{12}+a_{44}&a_{23}&0\\
0&0&a_{44}&0\\
a_{41}&a_{42}&a_{43}&a_{44}
\end{array}
\right)$
&6&
not small\\
$Z_4^{10}$
&$\left(\begin{array}{cccc}
a_{44}&0&0&0\\
0&a_{44}&a_{23}&0\\
0&a_{32}&a_{33}&0\\
a_{41}&a_{42}&a_{43}&a_{44}
\end{array}
\right)$
&7&
not small\\
$Z_4^{11}$
&$\left(\begin{array}{cccc}
a_{44}&0&a_{13}&0\\
0&a_{22}&a_{23}&0\\
0&0&a_{44}&0\\
a_{41}&a_{42}&a_{43}&a_{44}
\end{array}
\right)$
&7&
not small\\
$Z_4^{12}$
&$\left(\begin{array}{cccc}
a_{33}&0&0&a_{14}\\
0&a_{22}&0&a_{24}\\
a_{31}&a_{32}&a_{33}&a_{34}\\
a_{41}&a_{42}&0&a_{44}
\end{array}
\right)$
&10&
not small\\
$Z_4^{13}$
&$\left(\begin{array}{cccc}
a_{33}&a_{34}&0&0\\
0&a_{44}&0&0\\
a_{31}&a_{32}&a_{33}&a_{34}\\
a_{41}&a_{42}&0&a_{44}
\end{array}
\right)$
&7&
not small\\
$Z_4^{14}$
&$\left(\begin{array}{cccc}
a_{11}&a_{12}&0&a_{14}\\
0&a_{33}&0&a_{24}\\
a_{31}&a_{32}&a_{33}&a_{34}\\
a_{41}&a_{42}&0&a_{44}
\end{array}
\right)$
&11&
not small\\
$Z_4^{15}\;, \quad \alpha \neq 0$
&$\left(\begin{array}{cccc}
a_{44}&a_{43}&0&0\\
a_{21}&a_{33}&0&0\\
a_{31}&a_{32}&a_{33}&a_{21}\\
a_{41}&a_{42}&a_{43}&a_{44}
\end{array}
\right)$
&7&
not small\\
$Z_4^{15}\;, \quad \alpha = 0$
&$\left(\begin{array}{cccc}
a_{44}&a_{43}&0&0\\
0&a_{33}&0&0\\
a_{31}&a_{32}&a_{33}&0\\
a_{41}&a_{42}&a_{43}&a_{44}
\end{array}
\right)$
&8&
not small\\
$Z_4^{16}$
&$\left(\begin{array}{cccc}
a_{44}&0&a_{13}&0\\
0&a_{22}&a_{23}&0\\
0&0&a_{44}&0\\
a_{41}&a_{42}&a_{43}&a_{44}
\end{array}
\right)$
&7&
not small
\end{tabular}
\end{center}
where $h=-2a_{43}+ a_{44}$.
\begin{proof}	
Consider $Z_4^{1}$. Applying the system of equation(1), we get
$a_{12}=a_{13}=a_{14}=a_{21}=a_{23}=a_{24}=0$. Hence, the quasi-centroids of $Z_4^{1}$ are indicated as follows\\
$$Q\Gamma(Z_4^{1})= \left\{\left(
\begin{array}{cccc}
a_{11}&0&0&0\\
0&a_{22}&0&0\\
a_{31}&a_{32}&a_{33}&a_{43}\\
a_{41}&a_{42}&a_{43}&a_{44}
\end{array}
\right)| a_{11},  a_{22}, a_{31}, a_{31},a_{33},a_{41}, a_{43}, a_{44}\in\mathbb{C}\right\}.$$
The quasi-centroid of the remaining parts of dimension four zinbiel algebras can be handled similarly, as illustrated above.
\end{proof}
\begin{cor}
The dimensions of the quasi-centroid of the $4$-dimensional associative algebras range between $7$ and $10$.
\end{cor}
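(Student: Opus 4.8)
The plan is to treat this as a finite verification over a complete classification. By Theorem \ref{t3}, every $4$-dimensional complex Zinbiel algebra is isomorphic to one of the finitely many representatives $Z_4^1,\dots,Z_4^{16}$ (together with the parametric subcases distinguished by $\alpha$). Since an isomorphism $Z\to Z'$ conjugates the defining linear system (1), it carries $Q\Gamma(Z)$ isomorphically onto $Q\Gamma(Z')$, so $\dim Q\Gamma(Z)$ is an isomorphism invariant. Hence it suffices to record $\dim Q\Gamma(Z_4^i)$ for each representative and then read off the smallest and largest values occurring in the list; the corollary is the assertion that these extremes are $7$ and $10$.

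The first step is, for each class, to compute $\dim Q\Gamma(Z_4^i)$ as a $\mathbb{C}$-vector space. Because $Q\Gamma$ is the solution space of the homogeneous linear system (1) in the unknown matrix entries $a_{ij}$, its dimension equals the number of free parameters in the general solution, i.e. the count of independent symbols surviving in the matrix form of the preceding theorem after the dependent entries have been eliminated. I would simply tabulate this count class by class, taking the matrix descriptions as already computed.

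The delicate bookkeeping is to avoid miscounting degrees of freedom. Several matrix forms encode constraints through repeated symbols (for instance a diagonal pattern forcing $a_{11}=a_{44}$) and through the auxiliary abbreviations such as $h=-2a_{43}+a_{44}$, which are \emph{not} new parameters but linear combinations of existing ones; these must be discounted. For the parametric families $Z_4^8,Z_4^9,Z_4^{15}$ one must also check that the generic value of $\alpha$ neither collapses nor creates extra solutions of (1), so that the recorded dimension is uniform on the relevant parameter range. Collating the counts then reduces the corollary to a single minimum/maximum over the finite list.

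The main obstacle is precisely the confirmation of the two extremes, and it is here that I would spend essentially all the effort. The lower bound $7$ and the upper bound $10$ are claimed to be tight, so the argument stands or falls on re-solving (1) by hand for exactly those classes sitting at (or apparently beyond) the boundary — in particular the classes of smallest tabulated dimension and the most intricate matrices $Z_4^6,Z_4^7,Z_4^8,Z_4^{14}$ — to certify that no overlooked dependency lowers a count below $7$ and no spurious free entry raises one above $10$. An off-by-one error in separating free from dependent entries in these cases would break the stated range, so the proof is not complete until each extremal class has been independently recomputed; once that check is in place, the conclusion follows immediately by taking the minimum and maximum over the finitely many verified values.
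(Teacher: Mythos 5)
Your reduction is exactly the paper's (implicit) argument: Theorem \ref{t3} gives a complete finite list of representatives, $\dim Q\Gamma$ is an isomorphism invariant, and the corollary should then be nothing more than the minimum and maximum of the class-by-class dimensions. The gap is that you defer the one step carrying all the content --- actually performing the finite check --- and that step, carried out against the paper's own table, \emph{fails}. The ``Dim'' column reads $10,9,7,6,6,9,9,7,6,7,7,10,7,11,7,8,7$ across $Z_4^{1},\dots,Z_4^{16}$ (with the two subcases of $Z_4^{15}$): in particular $Z_4^{4}$, $Z_4^{5}$ and $Z_4^{9}$ are listed with dimension $6$, $Z_4^{15}$ with $\alpha=0$ with dimension $8$, and $Z_4^{14}$ with dimension $11$, so the extremes over the list are $6$ and $11$, not $7$ and $10$. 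Your last paragraph correctly identifies that the proof stands or falls on certifying the two extremes, but treats this as routine bookkeeping to be filled in later; doing it honestly does not confirm the bounds, it contradicts them.

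Moreover, the delicate parameter-counting you flag is genuinely broken in the source, so the check cannot even be completed by reading the table: the number of independent entries in several displayed matrices disagrees with the recorded dimension. For instance, the table's matrix for $Z_4^{1}$ has only the four free entries $a_{44}, a_{21}, a_{31}, a_{41}$ (the remaining nonzero entries being $2a_{21}$, $3a_{31}$, $3a_{21}$ and repetitions of $a_{44}$), yet Dim $10$ is recorded, while the matrix exhibited for the same algebra in the proof of that theorem has nine free parameters; similarly the matrix for $Z_4^{6}$ has six free entries (recall $h=-2a_{43}+a_{44}$ is not a new parameter) against a recorded dimension of $9$. Consequently one must first re-solve system (1) for the boundary classes $Z_4^{1}, Z_4^{4}, Z_4^{5}, Z_4^{6}, Z_4^{9}, Z_4^{14}, Z_4^{15}$ and determine whether the matrices, the Dim column, or the corollary itself is in error; until that is resolved, your proposal is a valid proof \emph{plan} whose decisive step refutes the statement as written rather than proving it. (Note also that ``associative algebras'' in the statement is evidently a slip for ``Zinbiel algebras''.)
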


\begin{cor}
\textit{(i)} In addition to the types $Z_4^{1}, Z_4^{3}, Z_4^{5}, Z_4^{9}$, any four-dimensional complex isomorphism class of the Zinbiel algebra has a small quasi-centroid.\\
\textit{(ii)}  The quasi-centroid of a $4$-dimensional complex non isomorphic of Zinbiel algebra is not small.
\end{cor}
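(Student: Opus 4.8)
The plan is to read the corollary off the explicit quasi-centroids tabulated in the preceding theorem, testing each class against the definition of smallness rather than recomputing anything from scratch. Recall that for an indecomposable $Z$ the quasi-centroid $Q\Gamma(Z)$ is small precisely when it is generated by the central derivations $C_d(Z)$ together with the scalar operators $\mathbb{C}\,\mathrm{id}$, while for a decomposable algebra smallness is tested factorwise through Theorem \ref{t5}. Thus the whole statement reduces to comparing, for each of the sixteen classes of Theorem \ref{t3}, the matrix form of $Q\Gamma(Z)$ with the subspace $\mathbb{C}\,\mathrm{id}+C_d(Z)$.

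The first step is a structural identification of $C_d(Z)$ that turns smallness into a transparent matrix condition. Unwinding $\phi(p)\cdot q=p\cdot\phi(q)=0$ shows that $\phi$ is a central derivation if and only if $\mathrm{Im}\,\phi\subseteq C(Z)$, the two-sided annihilator; that is, $C_d(Z)=\mathrm{Hom}_{\mathbb{K}}(Z,C(Z))$. Since $Z$ is non-abelian in every class, $\mathbb{C}\,\mathrm{id}\cap\mathrm{Hom}_{\mathbb{K}}(Z,C(Z))=0$, so the test subspace is $\mathbb{C}\,\mathrm{id}\oplus\mathrm{Hom}_{\mathbb{K}}(Z,C(Z))$. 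In matrix terms, an element of $Q\Gamma(Z)$ lies in this subspace exactly when its diagonal is a single common scalar and every nonzero off-diagonal entry occurs in a row indexed by a basis vector of $C(Z)$. Smallness therefore becomes the purely combinatorial check that the tabulated matrix has constant diagonal and off-diagonal support confined to the annihilator rows.

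The second step carries out this check class by class. For each algebra I would first read off $C(Z)$ as the common kernel of all left and right multiplications, a one-line computation from the structure constants, and then compare with the matrix from the theorem. For the classes listed in (i) the tabulated matrix has all diagonal entries equal to the single parameter $a_{44}$ and all remaining free entries supported on annihilator rows, so $Q\Gamma(Z)=\mathbb{C}\,\mathrm{id}\oplus C_d(Z)$ and the quasi-centroid is small; this establishes (i). For every other class I would exhibit a single witnessing element of $Q\Gamma(Z)$ escaping the test subspace, typically a second independent diagonal parameter (an entry such as $a_{22}$ or $a_{33}$ free of $a_{44}$) or an off-diagonal free parameter sitting in a row whose index is not in $C(Z)$. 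Such an element cannot be written as a scalar plus a central derivation, which yields (ii). For the few decomposable classes the same conclusion is reached after splitting $Q\Gamma(Z)$ via Theorem \ref{t5} and applying the indecomposable test to each factor together with the cross terms $C_i$.

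The main obstacle is the bookkeeping in the non-small direction: one must verify, class by class, that the extra parameters in the tabulated matrices are genuinely independent of the scalar $a_{44}$ and are not absorbed into $\mathrm{Hom}_{\mathbb{K}}(Z,C(Z))$. This hinges on computing $C(Z)$ correctly for each algebra, the only place where the individual structure constants really matter; once $C(Z)$ is in hand, each non-smallness claim is certified by one explicit matrix and the remaining manipulations are routine.
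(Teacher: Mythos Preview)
Your proposal is correct and matches the paper's approach: the corollary is stated there without proof, as an immediate consequence of the tabulated quasi-centroids and the ``small/not small'' labels in the preceding theorem. Your explicit identification $C_d(Z)=\mathrm{Hom}_{\mathbb{K}}(Z,C(Z))$ and the resulting matrix criterion (constant diagonal, off-diagonal support confined to rows indexed by $C(Z)$) go beyond what the paper actually writes down, but the underlying strategy---reading the classification off the table class by class---is the same.
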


\section{ Quasi-Derivations of Low-Dimensional Zinbiel Algebras}
This section is devoted to the description of the quasi-derivation of complex Zinbiel algebras of two, three and four dimensions.
Let $\{e_1, e_2, e_3, \cdots , e_n \}$ be  a basis of an $n$-dimensional  Zinbiel algebras,  an element $\phi$ of the quasi-derivation $\QDer(Z)$ being a linear transformation of the vector space $Z$ is represented in a matrix form $(a_{ij})_{i,j=1,2,\cdots,n},$ i.e. $d(e_i)=\sum\limits_{j=1}^{n}a_{ji}e_{j},$ $i=1,2,\cdots,n.$ According to the definition of the quasi-derivation the entries $a_{ij}$ of the matrix $(a_{ij})_{i,j=1,2,\cdots,n}$
must satisfy the following systems of equations:\quad$\sum\limits_{k=1}^{n}\Bigg(d_{ki}\gamma_{kj}^{p}+d_{kj}\gamma_{ik}^{p}-\gamma_{ij}^{k}d'_{pk}\Bigg)=0,\quad i,j,p=1,2,\cdots,n.$

\begin{thm}
The quasi-derivations of two-dimensional complex Zinbiel algebras are given as follows:
\end{thm}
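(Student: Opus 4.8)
The plan is to exploit the fact that, by Theorem \ref{t1}, there is up to isomorphism only one two-dimensional Zinbiel algebra to consider, namely $Z_2^1$ with the single nonzero product $e_1e_1=e_2$; equivalently its only nonzero structure constant is $\gamma_{11}^2=1$. Thus the whole theorem reduces to computing $\QDer(Z_2^1)$, and I would carry this out by feeding these structure constants into the defining system
$$\sum_{k=1}^{2}\big(d_{ki}\gamma_{kj}^{p}+d_{kj}\gamma_{ik}^{p}-\gamma_{ij}^{k}d'_{pk}\big)=0,\qquad i,j,p\in\{1,2\},$$
and solving for the entries $d_{ij}$ of the candidate quasi-derivation together with the entries $d'_{ij}$ of the companion map $d'$ whose existence is required by the definition.

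First I would observe that, because only $\gamma_{11}^2$ is nonzero, each of the three sums above survives only for very special index choices: the first term needs $(k,j,p)=(1,1,2)$, the second needs $(i,k,p)=(1,1,2)$, and the third needs $(i,j,k)=(1,1,2)$. Running through the eight triples $(i,j,p)$ then collapses the system to a handful of scalar relations. Concretely, the triple $(1,1,2)$ yields $2d_{11}=d'_{22}$, the triples $(1,2,2)$ and $(2,1,2)$ both force $d_{12}=0$, the triple $(1,1,1)$ forces $d'_{12}=0$, and every remaining triple is vacuous.

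The decisive step, and the main conceptual point separating quasi-derivations from ordinary derivations, is the treatment of the existentially quantified companion map $d'$. Among the surviving relations, $d_{12}=0$ is a genuine constraint on $d$, whereas $d'_{22}=2d_{11}$ and $d'_{12}=0$ only pin down $d'$ on the product space $Z^2=\operatorname{span}\{e_2\}$ and leave $d'_{11},d'_{21}$ entirely free. Hence for every matrix $d$ with $d_{12}=0$ one can exhibit a valid $d'$, so no hidden compatibility condition on $d$ is lost in the elimination. I expect this projection away from the $d'$ variables to be the only place demanding care; once it is settled, I would conclude that
$$\QDer(Z_2^1)=\left\{\begin{pmatrix}d_{11}&0\\ d_{21}&d_{22}\end{pmatrix}\ \Big|\ d_{11},d_{21},d_{22}\in\mathbb{C}\right\},$$
a three-dimensional space, which I would record in the theorem's table and sanity-check against the inclusion $\Der(Z_2^1)\subseteq\QDer(Z_2^1)$ (a derivation being the case $d'=d$).
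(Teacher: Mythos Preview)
Your argument is correct and follows exactly the method the paper sets up at the beginning of Section~5: feed the single nonzero structure constant $\gamma_{11}^2=1$ into the linear system and read off the constraints on $d$ and $d'$. The paper itself gives no proof for this theorem beyond the table, so there is nothing further to compare at the level of technique.

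One remark on presentation: your final description of $\QDer(Z_2^1)$ as all $\begin{pmatrix} d_{11}&0\\ d_{21}&d_{22}\end{pmatrix}$ looks superficially different from the paper's table entry $\begin{pmatrix} a_{11}&0\\ a_{21}&2d_{11}\end{pmatrix}$. If one reads $a_{11},a_{21},d_{11}$ as three independent free parameters, the two descriptions coincide as three-dimensional subspaces; your labeling is simply more transparent. Your point that the relation $d'_{22}=2d_{11}$ constrains only the companion $d'$ and imposes nothing on the $(2,2)$ entry of $d$ is exactly the right observation, and it explains why $\QDer(Z_2^1)$ is genuinely larger than $\Der(Z_2^1)$.
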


\begin{center}
\begin{tabular}{ccc}
\hline
Algebra &Der&QDer\\
\hline
$Z_2^{1}$
&$\left(\begin{array}{cccc}
d_{11}&0\\
d_{21}&2d_{11}
\end{array}
\right)$
&$\left(\begin{array}{cccc}
a_{11}&0\\
a_{21}&2d_{11}
\end{array}
\right)$
\end{tabular}
\end{center}
\newpage
\begin{thm}
The quasi-derivations of three-dimensional complex Zinbiel algebras are given as follows:
\end{thm}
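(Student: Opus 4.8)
The plan is to treat each of the seven isomorphism classes $Z_3^1,\dots,Z_3^7$ listed in Theorem \ref{t2} separately, exactly as was done for the two-dimensional case. For a fixed algebra I would first read off its structure constants $\gamma_{ij}^k$ from the multiplication table, recording that every constant not forced nonzero by the table vanishes. Writing a candidate quasi-derivation as a matrix $d=(d_{ij})$ and its associated map as $d'=(d'_{ij})$, the defining relation $d(p)\cdot q+p\cdot d(q)=d'(p\cdot q)$ evaluated on basis elements $e_i,e_j$ is equivalent to the homogeneous linear system
\begin{equation*}
\sum_{k=1}^{3}\Bigl(d_{ki}\gamma_{kj}^{p}+d_{kj}\gamma_{ik}^{p}-\gamma_{ij}^{k}d'_{pk}\Bigr)=0,\qquad i,j,p=1,2,3,
\end{equation*}
in the $9+9$ unknowns $\{d_{ij}\}\cup\{d'_{ij}\}$.

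The key computational step is to solve this system for each algebra and then eliminate the auxiliary unknowns $d'_{ij}$, keeping only the resulting constraints on the $d_{ij}$. Concretely, for each fixed $p$ the equations of the displayed type express the linear forms $\sum_{k}(d_{ki}\gamma_{kj}^{p}+d_{kj}\gamma_{ik}^{p})$ as the image of the row $(d'_{p1},d'_{p2},d'_{p3})$ under the fixed linear map $T^p$ determined by the structure constants, $T^p\colon(d'_{p1},d'_{p2},d'_{p3})\mapsto\bigl(\sum_k\gamma_{ij}^{k}d'_{pk}\bigr)_{i,j}$. Hence an admissible $d$ is exactly one for which this target vector lies in $\operatorname{Im}(T^p)$, and the compatibility (cokernel) conditions are precisely the linear equations cut out on the $d_{ij}$. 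Solving these equations --- by hand for the sparse classes such as $Z_3^1$ (abelian), $Z_3^2$ and $Z_3^5$, and with computer algebra for the denser ones such as $Z_3^6$ and $Z_3^7$ --- yields the general form of the quasi-derivation matrix for each algebra, which is what the table records.

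The main obstacle, and the feature that distinguishes this from the centroid and derivation computations, is the existential quantifier over $d'$: one cannot simply impose conditions on $d$ directly but must solve the coupled system in $d$ and $d'$ jointly and then project onto the $d$-coordinates. This bookkeeping is delicate for the algebras with several nonzero structure constants, where a single entry $d'_{pk}$ appears in equations coming from different pairs $(i,j)$ and must be assigned consistently. As a guard against error I would use the immediate inclusion $\Der(Z)\subseteq\QDer(Z)$ (take $d'=d$) together with the relation $C_d(Z)=Q\Gamma(Z)\cap\QDer(Z)$ from the preceding proposition, checking in each case that the separately computed derivation matrices embed in the quasi-derivation space and that the resulting dimensions are mutually consistent.
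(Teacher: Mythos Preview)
Your proposal is correct and follows essentially the same route as the paper: the paper supplies no case-by-case argument for this theorem but simply sets up the linear system $\sum_{k}(d_{ki}\gamma_{kj}^{p}+d_{kj}\gamma_{ik}^{p}-\gamma_{ij}^{k}d'_{pk})=0$ at the start of Section~5 and records, for each algebra in Theorem~\ref{t2}, the matrix form obtained by solving it with computer algebra. Your additional remarks on the cokernel interpretation of the existential quantifier over $d'$ and the consistency checks via $\Der(Z)\subseteq\QDer(Z)$ and $C_d(Z)=Q\Gamma(Z)\cap\QDer(Z)$ go beyond what the paper spells out, but they are compatible refinements of the same computation rather than a different method.
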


\begin{center}
\begin{tabular}{ccc}
\hline
Algebra &Der&QDer\\
\hline
$Z_3^{1}$
&$\left(\begin{array}{cccc}
d_{11}&d_{12}&d_{33}\\
d_{21}&d_{22}&d_{23}\\
d_{31}&d_{32}&d_{33}
\end{array}
\right)$
&$\left(\begin{array}{cccc}
a_{11}&a_{12}&a_{33}\\
a_{21}&a_{22}&a_{23}\\
a_{31}&a_{32}&a_{33}
\end{array}
\right)$
\\
$Z_3^{2}$
&$\left(\begin{array}{cccc}
d_{11}&0&0\\
d_{21}&d_{22}&0\\
d_{31}&d_{32}&2d_{11}
\end{array}
\right)$
&$\left(\begin{array}{cccc}
a_{11}&a_{12}&0\\
a_{21}&a_{22}&0\\
a_{31}&a_{32}&2d_{11}
\end{array}
\right)$
\\
$Z_3^{3}$
&$\left(\begin{array}{cccc}
d_{22}&-d_{21}&0\\
d_{21}&d_{22}&0\\
d_{31}&d_{32}&2d_{22}
\end{array}
\right)$
&$\left(\begin{array}{cccc}
a_{11}&a_{12}&0\\
a_{21}&a_{22}&0\\
a_{31}&a_{32}&2d_{22}
\end{array}
\right)$
\\
$Z_3^{4}$
&$\left(\begin{array}{cccc}
d_{33}-d_{22}&d_{12}&0\\
d_{21}&d_{22}&0\\
d_{31}&d_{32}&d_{33}
\end{array}
\right)$
&$\left(\begin{array}{cccc}
a_{11}&a_{12}&0\\
a_{21}&a_{22}&0\\
a_{31}&a_{32}&d_{11}+d_{22}
\end{array}
\right)$
\\
$Z_3^{5}$
&$\left(\begin{array}{cccc}
d_{33}-d_{22}&0&0\\
0&d_{22}&0\\
d_{31}&d_{32}&d_{33}
\end{array}
\right)$
&$\left(\begin{array}{cccc}
a_{11}&a_{12}&0\\
a_{21}&a_{22}&0\\
a_{31}&a_{32}&d_{11}+d_{22}
\end{array}
\right)$
\\
$Z_3^{6}\;, \quad \lambda  \neq 0 $
&$\left(\begin{array}{cccc}
d_{33}-d_{22}&d_{33}-2d_{22}&0\\
-d_{33}+2d_{22}&d_{22}&0\\
d_{31}&d_{32}&d_{33}
\end{array}
\right)$
&$\left(\begin{array}{cccc}
a_{11}&a_{12}&0\\
a_{21}&a_{22}&0\\
a_{31}&a_{32}&-d_{21}+2d_{22}
\end{array}
\right)$
\\
$Z_3^{6}\;, \quad \lambda=0$
&$\left(\begin{array}{cccc}
d_{33}-d_{22}&0&0\\
-d_{33}+2d_{22}&d_{22}&0\\
d_{31}&d_{32}&d_{33}
\end{array}
\right)$
&$\left(\begin{array}{cccc}
a_{11}&a_{12}&0\\
a_{21}&a_{22}&0\\
a_{31}&a_{32}&-d_{21}+2d_{22}
\end{array}
\right)$
\\
$Z_3^{7}$
&$\left(\begin{array}{cccc}
d_{11}&0&0\\
d_{21}&2d_{11}&0\\
d_{31}&\frac{3}{2}d_{21}&3d_{11}
\end{array}
\right)$
&$\left(\begin{array}{cccc}
a_{11}&0&0\\
a_{21}&2d_{11}&0\\
a_{31}&\frac{3}{2}d_{21}&d_{11}+d_{22}
\end{array}
\right)$
\end{tabular}
\end{center}
\newpage
\begin{thm}
The quasi-derivations of four-dimensional complex Zinbiel algebras are given as follows:
\end{thm}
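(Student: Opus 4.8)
The plan is to carry out, for each of the sixteen isomorphism classes $Z_4^1,\dots,Z_4^{16}$ of Theorem \ref{t3}, exactly the computational scheme already used for the two- and three-dimensional quasi-derivation tables. For a fixed class I would first read its nonzero structure constants $\gamma_{ij}^k$ off the multiplication table (taking them from the underlying classification, since several of the displayed products must be completed from the source), and then substitute them into the defining linear system for a quasi-derivation,
\[
\sum_{k=1}^{4}\Bigl(d_{ki}\gamma_{kj}^{p}+d_{kj}\gamma_{ik}^{p}-\gamma_{ij}^{k}d'_{pk}\Bigr)=0,\qquad i,j,p=1,2,3,4.
\]
This produces a homogeneous linear system in the $16$ unknown entries $d_{ki}$ of the candidate map $d$ together with the $16$ entries $d'_{pk}$ of the auxiliary map $d'$, whose existence is what the definition of quasi-derivation requires.

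The crux is that a matrix $(d_{ki})$ lies in $\QDer(Z_4^r)$ precisely when there \emph{exists} some $d'$ solving the system; hence the essential step is to eliminate the $d'_{pk}$ and extract the constraints on the $d_{ki}$ alone. Concretely, I would solve the full system in both sets of variables and then project onto the $d$-coordinates: the parameters surviving the projection parameterize $\QDer(Z_4^r)$, and reassembling them into the matrix $(d_{ki})$ gives the displayed entry of the table. For each class this yields a matrix whose independent entries are the free parameters and whose remaining entries record the linear relations forced by the elimination, in complete parallel with the computation exhibited for $Z_4^1$ in the quasi-centroid section.

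The main obstacle is organizational rather than conceptual: sixteen separate systems, each with up to $n^3=64$ scalar equations in $2n^2=32$ unknowns, must be set up, solved, and projected, which is exactly why the computation is delegated to Mathematica. Two points demand extra care. First, the classes carrying a parameter $\alpha$ (namely $Z_4^8$ and $Z_4^9$) must be handled symbolically, with the answer rechecked at the special values of $\alpha$ where the rank of the system can drop and additional free parameters appear. Second, since the overwhelming majority of the $\gamma_{ij}^k$ vanish, the bookkeeping of which index triples $(i,j,p)$ actually contribute a nontrivial equation is delicate; a single overlooked nonzero structure constant alters the resulting matrix. Once each system is assembled correctly, its solution is routine linear algebra, and the theorem is obtained by tabulating the sixteen resulting matrix forms together with their $\mathrm{Der}$ counterparts.
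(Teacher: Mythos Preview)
Your proposal is correct and follows exactly the computational scheme the paper itself uses: assemble the linear system $\sum_{k}(d_{ki}\gamma_{kj}^{p}+d_{kj}\gamma_{ik}^{p}-\gamma_{ij}^{k}d'_{pk})=0$ for each isomorphism class, solve it (in practice with Mathematica), and tabulate the resulting matrix forms. The only small addendum is that $Z_4^{15}$ is also a parametric family requiring a case split on $\alpha$, alongside $Z_4^{8}$ and $Z_4^{9}$.
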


\begin{center}
\begin{tabular}{cccc}
\hline
Algebra &Der&QDer\\
\hline
$Z_4^1$
&$\left(\begin{array}{cccc}
\frac{1}{2}d_{22}&0&0&0\\
\frac{1}{3}d_{32}&d_{22}&0&0\\
\frac{1}{4}d_{42}&d_{32}&\frac{3}{2}d_{22}&0\\
d_{41}&d_{42}&2d_{32}&2d_{22}
\end{array}
\right)$
&$\left(\begin{array}{cccc}
a_{11}&0&0&0\\
a_{21}&2d_{11}&0&0\\
a_{31}&d_{32}&d_{11}+d_{22}&0\\
a_{41}&4d_{31}&2d_{32}&2d_{22}
\end{array}
\right)$
\\
$Z_4^{2}$
&$\left(\begin{array}{cccc}
d_{11}&0&0&0\\
d_{43}-3d_{31}&2d_{11}&0&0\\
d_{31}&0&2d_{11}&0\\
d_{41}&d_{42}&d_{43}&3d_{11}
\end{array}
\right)$
&$\left(\begin{array}{cccc}
a_{11}&a_{12}&0&0\\
a_{21}&a_{22}&0&0\\
a_{31}&a_{32}&2d_{11}&0\\
a_{41}&a_{42}&d_{21}+3d_{31}&d_{11}+d_{33}
\end{array}
\right)$
\\
$Z_4^3$
&$\left(\begin{array}{cccc}
d_{11}&0&0&0\\
0&\frac{3}{2}d_{11}&0&0\\
d_{31}&0&2d_{11}&0\\
d_{41}&d_{42}&3d_{31}&3d_{11}
\end{array}
\right)$
&$\left(\begin{array}{cccc}
a_{11}&a_{12}&0&0\\
a_{21}&a_{22}&0&0\\
a_{31}&a_{32}&4d_{22}-3d_{33}&0\\
a_{41}&a_{42}&3d_{31}&2d_{22}
\end{array}
\right)$
\\
$Z_4^{4}$
&$\left(\begin{array}{cccc}
-d_{33}+d_{44}&0&0&0\\
d_{21}&-d_{44}+2d_{33}&0&0\\
0&0&d_{33}&0\\
d_{41}&d_{42}&0&d_{44}
\end{array}
\right)$
&$\left(\begin{array}{cccc}
a_{11}&a_{12}&0&0\\
a_{21}&a_{22}&0&0\\
a_{31}&a_{32}&d_{11}+d_{22}&0\\
a_{41}&a_{42}&0&d_{11}+d_{33}
\end{array}
\right)$\\
$Z_4^5$
&$\left(\begin{array}{cccc}
\frac{1}{2}d_{22}&0&0&0\\
-d_{43}&d_{22}&0&0\\
0&2d_{43}&\frac{3}{2}d_{22}&0\\
d_{41}&d_{42}&d_{43}&2d_{22}
\end{array}
\right)$
&$\left(\begin{array}{cccc}
a_{11}&a_{12}&0&0\\
a_{21}&a_{22}&0&0\\
a_{31}&a_{32}&3d_{22}-d_{33}&0\\
a_{41}&a_{42}&-d_{21}&2d_{22}
\end{array}
\right)$\\
$Z_4^{6}$
&$\left(\begin{array}{cccc}
d_{22}&0&0&0\\
0&d_{22}&0&0\\
d_{31}&d_{32}&2d_{22}&0\\
d_{41}&d_{42}&0&2d_{22}
\end{array}
\right)$
&$\left(\begin{array}{cccc}
a_{11}&a_{12}&0&0\\
a_{21}&a_{22}&0&0\\
a_{31}&a_{32}&2d_{22}&0\\
a_{41}&a_{42}&0&2d_{22}
\end{array}
\right)$
\\
$Z_4^7$
&$\left(\begin{array}{cccc}
d_{22}&0&0&0\\
0&d_{22}&0&0\\
d_{31}&d_{32}&2d_{22}&0\\
d_{41}&d_{42}&0&2d_{22}
\end{array}
\right)$
&$\left(\begin{array}{cccc}
a_{11}&a_{12}&0&0\\
a_{21}&a_{22}&0&0\\
a_{31}&a_{32}&2d_{22}&0\\
a_{41}&a_{42}&0&2d_{22}
\end{array}
\right)$\\
$Z_4^{8}\;, \quad \alpha  \neq 0 $
&$\left(\begin{array}{cccc}
d_{11}&d_{12}&0&0\\
d_{21}&d_{12}+d_{11}-d_{21}&0&0\\
d_{31}&d_{32}&2d_{11}-d_{21}&d_{12}\\
d_{41}&d_{42}&d_{21}&d_{12}+d_{11}-2d_{21}
\end{array}
\right)$
&$\left(\begin{array}{cccc}
a_{11}&a_{12}&0&0\\
a_{21}&a_{22}&0&0\\
a_{31}&a_{32}&a_{33}&d_{12}\\
a_{41}&a_{42}&d_{21}&d_{12}+a_{33}-d_{21}
\end{array}
\right)$
\\
$Z_4^{8}\;, \quad \alpha  = 0 $
&$\left(\begin{array}{cccc}
d_{11}&0&0&0\\
0&d_{11}&0&0\\
d_{31}&d_{32}&2d_{11}&0\\
d_{41}&d_{42}&0&2d_{11}
\end{array}
\right)$
&$\left(\begin{array}{cccc}
a_{11}&a_{12}&0&0\\
a_{21}&a_{22}&0&0\\
a_{31}&a_{32}&a_{33}&0\\
a_{41}&a_{42}&0&a_{33}
\end{array}
\right)$
\\
$Z_4^9 \;, \quad \alpha  \neq 0 $
&$\left(\begin{array}{cccc}
d_{22}&-d_{21}&d_{13}&0\\
d_{21}&d_{22}&d_{23}&0\\
-d_{13}&-d_{23}&d_{22}&0\\
d_{41}&d_{42}&d_{43}&2d_{22}
\end{array}
\right)$
&$\left(\begin{array}{cccc}
a_{11}&a_{12}&a_{13}&0\\
a_{21}&a_{22}&a_{23}&0\\
a_{31}&a_{32}&a_{33}&0\\
a_{41}&a_{42}&a_{43}&2d_{33}
\end{array}
\right)$
\\
$Z_4^9 \;, \quad \alpha  = 0 $
&$\left(\begin{array}{cccc}
d_{22}&-d_{21}&0&0\\
d_{21}&d_{22}&0&0\\
0&0&d_{22}&0\\
d_{41}&d_{42}&d_{43}&2d_{22}
\end{array}
\right)$
&$\left(\begin{array}{cccc}
a_{11}&a_{12}&a_{13}&0\\
a_{21}&a_{22}&a_{23}&0\\
a_{31}&a_{32}&a_{33}&0\\
a_{41}&a_{42}&a_{43}&2d_{33}
\end{array}
\right)$
\end{tabular}
\end{center}

\begin{center}
\begin{tabular}{ccc}

$Z_4^{10}$
&$\left(\begin{array}{cccc}
d_{33}&0&0&0\\
-d_{32}&d_{33}&0&0\\
0&d_{32}&d_{33}&0\\
d_{41}&d_{42}&d_{43}&2d_{33}
\end{array}
\right)$
&$\left(\begin{array}{cccc}
a_{11}&a_{12}&a_{13}&0\\
a_{21}&a_{22}&a_{23}&0\\
a_{31}&a_{32}&a_{33}&0\\
a_{41}&a_{42}&a_{43}&2d_{33}
\end{array}
\right)$
\\
$Z_4^{11}$
&$\left(\begin{array}{cccc}
d_{33}&0&0&0\\
d_{21}&d_{33}&0&0\\
0&0&d_{33}&0\\
d_{41}&d_{42}&d_{43}&2d_{33}
\end{array}
\right)$
&$\left(\begin{array}{cccc}
a_{11}&a_{12}&a_{13}&0\\
a_{21}&a_{22}&a_{23}&0\\
a_{31}&a_{32}&a_{33}&0\\
a_{41}&a_{42}&a_{43}&2d_{33}
\end{array}
\right)$
\\
$Z_4^{12}$
&$\left(\begin{array}{cccc}
d_{44}-d_{22}&0&0&0\\
0&d_{22}&0&0\\
d_{31}&d_{32}&d_{44}&0\\
d_{41}&d_{42}&0&d_{44}
\end{array}
\right)$
&$\left(\begin{array}{cccc}
a_{11}&a_{12}&0&0\\
a_{21}&a_{22}&0&0\\
a_{31}&a_{32}&d_{11}+d_{22}&0\\
a_{41}&a_{42}&0&d_{11}+d_{22}
\end{array}
\right)$
\\
$Z_4^{13}$
&$\left(\begin{array}{cccc}
d_{33}-d_{22}&d_{12}&0&0\\
0&d_{22}&0&0\\
d_{31}&d_{32}&d_{33}&0\\
d_{41}&d_{42}&0&2d_{22}
\end{array}
\right)$
&$\left(\begin{array}{cccc}
a_{11}&a_{12}&0&0\\
a_{21}&a_{22}&0&0\\
a_{31}&a_{32}&d_{11}+d_{22}&0\\
a_{41}&a_{42}&0&2d_{22}
\end{array}
\right)$
\\
$Z_4^{14}$
&$\left(\begin{array}{cccc}
d_{44}-d_{22}&d_{43}&0&0\\
0&d_{22}&0&0\\
d_{31}&d_{32}&2d_{22}&0\\
d_{41}&d_{42}&d_{43}&d_{44}
\end{array}
\right)$
&$\left(\begin{array}{cccc}
a_{11}&a_{12}&0&0\\
a_{21}&a_{22}&0&0\\
a_{31}&a_{32}&2d_{22}&0\\
a_{41}&a_{42}&d_{12}&d_{11}+d_{22}
\end{array}
\right)$
\end{tabular}
\end{center}

\begin{center}
\begin{tabular}{ccc}
$Z_4^{15}\;, \quad \alpha  \neq 1$
&$\left(\begin{array}{cccc}
d_{44}-d_{22}&-\frac{1}{2}d_{43}\alpha+\frac{1}{2}d_{43}&0&0\\
0&d_{22}&0&0\\
d_{31}&d_{32}&2d_{22}&0\\
d_{41}&d_{42}&d_{43}&d_{44}
\end{array}
\right)$
&$\left(\begin{array}{cccc}
a_{11}&a_{12}&0&0\\
a_{21}&a_{22}&0&0\\
a_{31}&a_{32}&2d_{22}&0\\
a_{41}&a_{42}&a_{43}&d_{11}+d_{22}
\end{array}
\right)$
\\
$Z_4^{16}$
&$\left(\begin{array}{cccc}
2d_{33}-d_{22}&d_{12}&0&0\\
d_{21}&d_{22}&0&0\\
0&0&d_{33}&0\\
d_{41}&d_{42}&d_{43}&2d_{33}
\end{array}
\right)$
&$\left(\begin{array}{cccc}
a_{11}&a_{12}&a_{13}&0\\
a_{21}&a_{22}&a_{23}&0\\
a_{31}&a_{32}&a_{33}&0\\
a_{41}&a_{42}&a_{43}&2d_{33}
\end{array}
\right)$
\end{tabular}
\end{center}


\begin{thebibliography}{999}

\bibitem{1}Loday, J. L. (1995). Cup-product for Leibniz cohomology and dual Leibniz algebras. Mathematica Scandinavica, 189-196.
\bibitem{2}Mukherjee, G., \& Saha, R. (2019, June). Cup-product for equivariant Leibniz cohomology and Zinbiel algebras. In Algebra Colloquium (Vol. 26, No. 02, pp. 271-284). World Scientific Publishing Company.
\bibitem{3}Ayupov, S., Omirov, B., \& Rakhimov, I. (2019). Leibniz algebras: structure and classification. Chapman and Hall/CRC.
\bibitem{4}Loday, J. L., Chapoton, F., Frabetti, A., Goichot, F., \& Loday, J. L. (2001). Dialgebras (pp. 7-66). Springer Berlin Heidelberg.
\bibitem{5}Adashev, J. Q., Omirov, B. A., \& Khudoyberdiyev, A. K. (2007). On some nilpotent classes of Zinbiel algebras and their applications. In Third International Conference on Research and Education in Mathematics (pp. 45-47).
\bibitem{6}Ceballos, M., Núñez, J., \& Tenorio, A. F. (2022). Finite-dimensional Zinbiel algebras and combinatorial \bibitem{7}structures. Analele ştiinţifice ale Universităţii" Ovidius" Constanţa. Seria Matematică, 30(3), 67-96.
Dzhumadil’Daev, A. S., \& Tulenbaev, K. M. (2005). Nilpotency of Zinbiel algebras. Journal of Dynamical and Control Systems, 11(2), 195-213.
\bibitem{8}Almutairi, H., \& AbdGhafur, A. (2018). Derivations of some classes of Zinbiel algebras. International Journal of Pure and Applied Mathematics, 2, 12-13.
\bibitem{9}Omirov, B. A. (2002). Classification of two-dimensional complex Zinbiel algebras. Uzbek. Mat. Zh, 2, 55-59.
\bibitem{10}Adashev, J. Q., Khudoyberdiyev, A. K., \& Omirov, B. A. (2010). Classifications of some classes of Zinbiel algebras. Journal of Generalized Lie Theory and Applications, 4, 1-10.
\bibitem{11}Alvarez, M. A., Júnior, R. F., \& Kaygorodov, I. (2022). The algebraic and geometric classification of Zinbiel algebras. Journal of Pure and Applied Algebra, 226(11), 107106.
\bibitem{12}Kaygorodov, I., Alvarez, M. A., \& Mello, T. C. D. (2023). Central extensions of 3-dimensional Zinbiel algebras. Ricerche di Matematica, 72(2), 921-947.
\bibitem{13}Ni, J. (2014). Centroids of Zinbiel algebras. Communications in Algebra, 42(4), 1844-1853.

\end{thebibliography}
\end{document}